\documentclass[a4paper,12pt]{article}

\usepackage[T1]{fontenc}
\usepackage{lmodern}
\usepackage{microtype}
\usepackage{amsmath}
\usepackage{amssymb}
\usepackage{amsthm}
\usepackage[hidelinks]{hyperref}
\usepackage{xcolor}

\usepackage[margin=2.5cm]{geometry}

\newtheorem{theorem}{Theorem}[section]
\newtheorem{proposition}[theorem]{Proposition}

\newtheorem{conjecture}[theorem]{Conjecture}
\newtheorem{lemma}[theorem]{Lemma}

\numberwithin{equation}{section}

\usepackage{thmtools}
\usepackage{thm-restate}

\title{An asymptotic bound for the Bermond--Thomassen Conjecture}

\author{ J\o rgen Bang-Jensen \thanks{JBJ: School of Mathematics,  Shandong University, Jinan 250100, China; Department of Mathematics and Computer Science, University of Southern Denmark, Odense DK-5230, Denmark. \texttt{jbj@imada.sdu.dk}.} \and
	Guanghui Wang\thanks{GHW: School of Mathematics,  Shandong University, Jinan 250100, China.   \texttt{ghwang@sdu.edu.cn}. The author was supported by National Natural Science Foundation of China (No.12231018).} \and
	Yun Wang\thanks{YW: Data Science Institute, Shandong University, Jinan 250100, China. \texttt{yunwang@sdu.edu.cn}. The author was supported by China Postdoctoral Science Foundation (No. 2025M773101, 2026T190825) and National Natural Science Foundation of China (No. 12501489).}}

\date{}
\hypersetup{pdftitle={An asymptotic bound for the Bermond--Thomassen Conjecture}}
\begin{document}

\maketitle

\begin{abstract}
The Bermond--Thomassen Conjecture asserts that every digraph with
minimum out-degree at least $2k-1$ contains $k$ disjoint
directed cycles; it remains open in general. We asymptotically
resolve it: there exist an absolute constant $K$ and a function
$g(k)=o(k)$, independent of $n$, such that every $n$-vertex finite
simple loopless digraph with minimum out-degree at least $2k+g(k)$
contains $k$ disjoint directed cycles for all integers $k\ge K$ and
$n\ge1$. In fact we obtain the explicit
error term $g(k)=O(k^{3/4}\sqrt{\log k})$. 
\end{abstract}

\noindent{\bf Keywords:} \texttt{Bermond--Thomassen Conjecture, fractional cycle packings, disjoint  cycles}

\section{Introduction}
Throughout, digraphs are finite and simple; pairs of
opposite arcs are allowed, and every cycle is directed and has length at
least two. We write $\delta^+(D)$ for the minimum out-degree of a
digraph $D$ and generally follow the notation and terminology of \cite{bang2009}.

Finding disjoint cycles under degree conditions is a classical problem
in extremal graph theory. In the undirected setting, the
Corr\'adi--Hajnal Theorem~\cite{corradiAMASH14} asserts that every graph with at
least $3k$ vertices and minimum degree at least $2k$ contains $k$
disjoint cycles. The directed analogue is considerably more delicate.
The transitive tournament has 
maximum possible total degree at every vertex, yet contains no directed
cycle; thus no condition on the total degree alone can force a directed
cycle, and one must impose a lower bound on the out-degree. This is the
natural directed counterpart of the undirected condition: every finite
digraph with positive minimum out-degree contains a directed cycle. The
central question is how large the minimum out-degree must be to
guarantee a prescribed number of disjoint directed cycles.

For a positive integer $k$, let $f(k)$ be the least integer $d$ such
that every digraph with minimum out-degree at least $d$ contains $k$
disjoint directed cycles. In 1981, Bermond and
Thomassen proposed the following conjecture.

\begin{conjecture}[Bermond--Thomassen Conjecture~\cite{bermondJGT5}]
For every positive integer $k$, every digraph with minimum out-degree
at least $2k-1$ contains $k$ disjoint directed cycles.
\end{conjecture}

The threshold $2k-1$ is best possible. Indeed, the complete symmetric
digraph on $2k-1$ vertices has minimum out-degree $2k-2$, while it contains at most
$k-1$ disjoint directed cycles; hence $f(k)\ge2k-1$.

Thomassen~\cite{thomassenC3} proved in 1983 that $f(k)$ is finite for every
$k$, obtaining $f(k)\le(k+1)!$, and settled the conjecture for $k=2$.
Alon~\cite{alonJCTB68} then introduced a probabilistic approach and showed
that $f(k)\le64k$, establishing that the required out-degree grows
linearly in the number of cycles. Buci\'c~\cite{bucicDM341} refined this
approach--incorporating the known case of three disjoint cycles--and
improved the bound to $f(k)\le18k$. While these results determine the
correct order of growth, they leave a wide gap to the conjectured
threshold.

Progress on the exact conjecture has followed a complementary path. The
case $k=1$ is immediate; after Thomassen's elegant proof for the case $k=2$, the case $k=3$
was proved by Lichiardopol, P\'or, and Sereni~\cite{lichiardopolSJDM23}. Among
tournaments, Bang-Jensen, Bessy, and Thomass\'e~\cite{bangjensenJGT75} showed
that minimum out-degree at least $2k-1$ guarantees $k$ disjoint
directed triangles. More recently, Bessy, Muis, Sereni, Steiner, and
Wiederrecht~\cite{bessyARX26} proved that the conjectured degree condition
guarantees $k$ disjoint near-directed cycles, that is, oriented cycles that can be made directed by flipping the direction of at most one arc. The exact conjecture for
general digraphs remains open for $k\ge4$.

\medskip
In this paper we determine the asymptotic value of $f(k)$, resolving
the Bermond--Thomassen conjecture to leading order with an explicit
sublinear error term.% that is uniform\jbjcomment{I think 'independent of' is a better word here }in the order of the host digraph.

\begin{restatable}{theorem}{MainThm}\label{thm:main}
    There exist absolute constants $C>0$ and $K$ such that, for all
integers $k\ge K$ and $n\ge1$, every $n$-vertex digraph $D$ with
$$
\delta^+(D)\ge 2k+Ck^{3/4}\sqrt{\log k}
$$
contains $k$ disjoint directed cycles.
\end{restatable}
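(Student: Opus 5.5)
We will prove Theorem~\ref{thm:main} by a fractional-relaxation-plus-rounding strategy, the one suggested by the keyword ``fractional cycle packings.'' Set $d=2k+Ck^{3/4}\sqrt{\log k}$. First, a standard reduction: we may delete arcs until every vertex has out-degree exactly $d$. We then take a minimal counterexample, so that deleting any vertex destroys the degree condition in a controlled way. If the digraph contains a short cycle $Z$ (length $2$ or $3$), remove $V(Z)$. Each remaining vertex loses at most $|Z|$ out-neighbours, so we would like to iterate. Done naively this costs $3$ per cycle rather than $2$, so the argument must be more careful: it has to find cycles whose removal costs on average only about $2$ units of out-degree per cycle, up to the error term.

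\emph{Key steps.} (1) \textbf{Fractional packing.} Show that $\delta^+(D)\ge d$ gives a fractional cycle packing of value at least roughly $d/2$. A cleaner form of this is a weighting $w$ on cycles with $\sum_{Z\ni v} w(Z)\le 1$ for all $v$ and total weight about $k+\Theta(Ck^{3/4}\sqrt{\log k})$. By LP duality it suffices to show that every fractional feedback vertex set $x\colon V\to[0,1]$ (one meeting every cycle with weight $\ge1$) has total weight at least $d/2$. To prove this, iteratively follow out-arcs to vertices of small $x$-value, and average over the $d$ out-neighbours in an acyclic ordering induced by $x$; the sink-like vertex forces $\sum x\gtrsim d/2$. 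This is the directed analogue of the fact that $K_{2k-1}^*$ is extremal. (2) \textbf{Short cycles dominate.} Show the fractional packing can be supported on cycles of length at most $L=k^{1/4}$, say. The idea is to use a shortest-cycle/BFS argument in the out-neighbourhood structure: if all cycles through most vertices are long, then the out-neighbourhoods expand, which contradicts the bounded density needed for a counterexample. Long cycles are then discarded at a loss of $O(n/L)$ in the weight, which becomes $O(k/L)$ after localising to a subdigraph of order $O(k)$. (3) \textbf{Localisation.} Pass to a subdigraph $H$ on $O(k)$ vertices that is still nearly $d$-out-regular. One way is to take a minimal subdigraph with $\delta^+\ge d$, where the minimality yields a bounded number of vertices up to polynomial factors. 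Alternatively, Alon's random-sampling idea can be used to reduce $n$ to $\mathrm{poly}(k)$ while keeping the out-degree concentrated, with deviation of order $\sqrt{d\log k}$; this is the source of the $\sqrt{\log k}$. (4) \textbf{Rounding.} Apply a R\"odl-nibble / Pippenger--Spencer style random greedy rounding, or a Haxell-type independent transversal argument, to the hypergraph whose edges are the vertex sets of short cycles in the support. Its edges have size $\le L$ and it has small codegrees, so we get an integral packing of size $(1-O(L\cdot\varepsilon))$ times the fractional value. The loss here is $O(k^{3/4}\sqrt{\log k})$ once we balance $L\approx k^{1/4}$ against the concentration error $\varepsilon\approx k^{-1/2}\sqrt{\log k}$.

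\emph{Main obstacle.} The hardest step is making (1) and (4) compatible. Nibble-type rounding needs near-regular fractional weights with small codegrees, but the fractional optimum from LP duality may concentrate on a few vertices or on 2-cycles sharing vertices. To handle this, we first try a \emph{randomised} fractional solution: choose a random partition of $V(H)$ into about $k^{1/4}$ parts. In each part, use the degree condition, via a Chernoff bound, to show that every vertex keeps about $d/k^{1/4}$ out-neighbours inside its part. We then find within each part an almost-perfect packing that uses about $2$ vertices of degree budget per cycle, which controls the codegrees. The error budget $Ck^{3/4}\sqrt{\log k}$ is exactly what absorbs the Chernoff deviations $\sqrt{(d/k^{1/4})\log k}$ summed over the $k^{1/4}$ parts. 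The remaining delicate point is to show, inside each part, that density $\approx 2\times$(number of cycles wanted) really yields that many cycles up to lower-order terms. For this we intend to use the fractional duality bound of step (1) applied locally.
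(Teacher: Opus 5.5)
There is a genuine gap in the passage from the fractional packing to $k$ disjoint cycles, your steps (2)--(4).

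\textbf{Steps (2) and (3).} Step (2) is unsupported and quantitatively hopeless. The only general fact is that cycles longer than $L$ carry total fractional weight at most $n/L$, and nothing in your argument gives $n=O(k)$. A minimal subdigraph with $\delta^+\ge d$ has no order bound at all. Take the circulant in which $i$ points to $i+1,\dots,i+d$ mod $n$. It is strong and $d$-out-regular, so it has no proper subdigraph of minimum out-degree $d$. Its girth is $\lceil n/d\rceil$, so the degree condition alone forces no short cycles. The minimal-counterexample argument of Thomassen and Alon gives only $n<9k^3$, and then the loss $n/L$ is far larger than $k$.

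\textbf{Step (4).} Step (4) fails as well. Nibble theorems of Pippenger--Spencer type are for bounded edge size, and their quantitative versions lose a factor of order $(\mathrm{codegree}/\mathrm{degree})^{1/(L-1)}$. That factor tends to $1$ when $L=k^{1/4}$ and the ratio is only polynomial in $k$. The codegree hypothesis is also violated by heavy $2$-cycles. Your repair, partitioning into parts and finding an almost-perfect packing inside each part, is circular. Applying the fractional bound locally again yields only fractional cycles, and converting them into integral ones is exactly the original problem.

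\textbf{The missing idea: cycle length is the wrong cost.} When you delete $V(C)$, what matters is how many out-neighbours each surviving vertex loses. Draw $C^\ast$ with probability $x_C/\nu^\ast$ from a fractional packing of weight at least $s/2$. Then $\mathbb P[u\in V(C^\ast)]\le 2/s$ for every $u$, so the expected loss at $v$ is at most $2d^+(v)/s$, however long the cycles are. The paper combines this with three further ingredients:
\begin{enumerate}
\item a random colouring into $t\approx\tau^2 s/\log n$ classes, sampling inside a random class, which caps the worst-case loss at $O(\tau^{-2}\log n\cdot d^+(v)/s)$;
\item deleting cycles one at a time against thresholds $s_{j+1}=s_j-2/(1-2\tau_j)$, and showing that $(s_j/d^+_{H_j}(v))^q$ is a supermartingale;
\item a union bound over the $n<9k^3$ vertices of a minimal counterexample.
\end{enumerate}
This is where $\sqrt{\log k}$ enters, and no bound $n=O(k)$ is needed.

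\textbf{Step (1).} Separately, your sketch does not prove $\nu^\ast\ge\delta^+/2$, and no local ``sink'' argument can. In the circulant above, $y\equiv1/\lceil n/d\rceil$ is a fractional feedback vertex set in which every out-neighbourhood has weight only about $d^2/n$. The paper first uses a minimal-counterexample reduction to a strong oriented graph with $0<y_v<1/2$. It then needs a global region-growing argument (Lemma~\ref{lem:boundary}) to finish.
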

\iffalse
\begin{theorem}\label{thm:main}
There exist absolute constants $C>0$ and $K$ such that, for all
integers $k\ge K$ and $n\ge1$, every $n$-vertex digraph $D$ with
$$
\delta^+(D)\ge 2k+Ck^{3/4}\sqrt{\log k}
$$
contains $k$ disjoint directed cycles.
\end{theorem}
\fi

In fact, our method reaches the explicit choice
$K=10^{12}$ and $C=64\sqrt2$, see Section~\ref{sec:mainproof}.   Together with the complete symmetric digraph construction,
$ 2k-1\le f(k)\le 2k+O\!\left(k^{3/4}\sqrt{\log k}\right), $
so $\lim_{k\to\infty}f(k)/k=2$. Equivalently, for every $\varepsilon>0$, minimum out-degree at least $(2+\varepsilon)k$ guarantees $k$ disjoint directed cycles once $k$ is sufficiently large. Moreover, by Lemma~\ref{lem:large-order-equivalence}, the condition $k\ge K$ cannot be replaced by an assumption of sufficiently large order $n$ if we wish to retain the same minimum out-degree bound.  Lemma~\ref{lem:large-order-equivalence} is formally stated and proved in Section~\ref{sec:preliminaries}.

\bigskip
\textbf{The rest of the paper is organized as follows.}
We begin with Section~\ref{sec:notation}, which contains some notation and terminology.
In Section~\ref{sec:preliminaries} we recall fractional cycle packings and state the standard probabilistic estimates used later. 
Section~\ref{sec:order} bounds the order of a minimum counterexample
by means of contractions and an independent-set argument.
In Section~\ref{sec:iteration} we prove a lower bound on the fractional
cycle packing number (Theorem~\ref{thm:fractional}), construct a cycle
distribution with bounded vertex incidence
(Lemma~\ref{lem:distribution}), and show that cycles can be deleted
successively so that a suitable potential function does not increase
(Lemma~\ref{lem:deletion}). Finally, in Section~\ref{sec:mainproof} we
combine these ingredients to prove the main theorem.

\section{Notation}\label{sec:notation}

For a positive integer $a$, let $[a]:=\{1,2,\ldots,a\}$; for integers $a<b$, let $[a,b]:=\{a,a+1,\ldots,b\}$. We write $x\ll y$ when $x$ is sufficiently small as a function of $y$. The notation $t=a\pm b$ means $a-b\leq t\leq a+b$. All logarithms are natural.

Let $D=(V,E)$ be a digraph.  The order of $D$ is the number of vertices in $D$,  denoted by $|V(D)|$ or $|D|$.  We will denote an edge oriented from $u$ to $v$ by $uv$. We use $D[X]$ to denote the subdigraph induced by a vertex set $X$. Let $D-X = D[V\backslash X]$.  For $v\in V(D)$, we denote the set of out-neighbors and in-neighbors of $v$ in $D$ by $N^+(v)$ and $N^-(v)$ respectively, and we write $d^+(v)=|N^+(v)|$ and $d^-(v)=|N^-(v)|$ for the out-degree and in-degree of $v$, respectively. For $S\subseteq V(D)$, we write $N^-(v,S)=N^-(v)\cap S$, $N^+(v,S)=N^+(v)\cap S$. Set $d^-(v,S)=|N^-(v,S)|$ and $d^+(v,S)=|N^+(v, S)|$.   %For two subsets $A,B$ of $V(G)$, we use $E(A,B)$ to denote the set of edges from $A$ to $B$ and write $e(A,B) = |E(A,B)|$.  
Write $\nu(D)$ for the maximum number of disjoint cycles
in $D$. A digraph $D=(V,E)$ is strongly connected (or just strong) if it has a directed path from $x$ to $y$ for every ordered pair $x,y$ of vertices.

\section{Preliminaries}\label{sec:preliminaries}

\subsection{Fractional cycle packings}\label{subsec:fractional}

Fractional cycle packings with vertex capacity one were studied by
Seymour~\cite{seymourC15}, who related their value to the size of a minimum feedback
vertex set. We require a complementary estimate: a lower bound on the
fractional packing number in terms of the minimum out-degree, which we
prove in this section.
Let $\mathcal C(G)$ be the set of cycles of $G$.
A \emph{fractional cycle packing} assigns a number $x_C\ge0$ to
each $C\in\mathcal C(G)$ so that
$\sum_{C\ni v}x_C\le1$ for every vertex $v$. Its total weight is
$\sum_C x_C$, and the largest possible total weight is denoted by
$\nu^\ast(G)$. If $G$ has no cycle, we set $\nu^\ast(G)=0$.

The dual object is a \emph{fractional feedback vertex set}: it assigns
weights $y_v\ge0$ to vertices so that
$\sum_{v\in V(C)}y_v\ge1$ for every cycle $C$.
Write $y(X)=\sum_{v\in X}y_v$ for a set of vertices $X$.
Both problems are feasible: the primal by setting $x_C=0$ for every
cycle, and the dual by setting $y_v=1$ for every vertex. By finite linear programming duality~\cite[Chapter~7]{schrijverTLIP1986},
\begin{equation}\label{eq:duality}
\nu^\ast(G)=\min\{y(V(G)):\,y\text{ is a fractional feedback vertex set}\}.
\end{equation} 

Both optima are attained: the primal feasible region is compact since
$0\le x_C\le 1$, and in the dual any weight $y_v>1$ can be replaced by
$1$ without violating a cycle constraint. We may therefore choose an
optimal dual assignment with $0\le y_v\le 1$.

\subsection{Standard estimates}\label{subsec:estimate}

We will use the following standard estimates. %\jbjcomment{Perhaps we should put a reference for these estimates}

\begin{lemma}[Chernoff bound]\label{lem:chernoff}
Let $X\sim\mathrm{Bin}(n,p)$ with mean $\mu=np$. Then for $0<\tau<1$,
$$
\mathbb P\bigl[|X-\mu|>\tau\mu\bigr]\le2e^{-\tau^2\mu/3}.
$$
\end{lemma}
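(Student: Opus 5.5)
The plan is to bound the moment generating function and apply Markov's inequality to $e^{\lambda X}$, treating the two tails separately. Write $X=\sum_{i=1}^n X_i$ with $X_i$ independent Bernoulli$(p)$. Then $\mathbb E[e^{\lambda X_i}]=1+p(e^\lambda-1)\le \exp\bigl(p(e^\lambda-1)\bigr)$, using $1+x\le e^x$, so $\mathbb E[e^{\lambda X}]\le \exp\bigl(\mu(e^\lambda-1)\bigr)$ for every real $\lambda$.

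For the upper tail, take $\lambda=\log(1+\tau)>0$. Markov's inequality applied to $e^{\lambda X}$ gives
$$
\mathbb P[X\ge(1+\tau)\mu]\le \exp\bigl(\mu(\tau-(1+\tau)\log(1+\tau))\bigr).
$$
For the lower tail, take $\lambda=\log(1-\tau)<0$. The same argument gives
$$
\mathbb P[X\le(1-\tau)\mu]\le \exp\bigl(\mu(-\tau-(1-\tau)\log(1-\tau))\bigr).
$$
The strict inequality in $|X-\mu|>\tau\mu$ is weaker than what these two bounds control, so this costs nothing.

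What remains is to check the two elementary inequalities on $0<\tau<1$:
$$
(1+\tau)\log(1+\tau)-\tau\ge \tau^2/3,\qquad \tau+(1-\tau)\log(1-\tau)\ge\tau^2/2\ge\tau^2/3.
$$
For the second, let $h(\tau)$ be the difference of the two sides. Then $h(0)=0$, and $h'(\tau)=-\log(1-\tau)-\tau\ge 0$ since $-\log(1-\tau)\ge\tau$. For the first, let $\phi(\tau)=(1+\tau)\log(1+\tau)-\tau-\tau^2/3$. We have $\phi(0)=\phi'(0)=0$ and $\phi''(\tau)=\frac1{1+\tau}-\frac23$. This is nonnegative on $[0,\tfrac12]$ and negative afterwards, so $\phi'$ increases and then decreases. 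Because $\phi'(0)=0$, it suffices to check the right endpoint: $\phi'(1)=\log2-\tfrac23>0$. Hence $\phi'\ge0$ on $[0,1]$, and so $\phi\ge0$ there. This endpoint check is the main obstacle, though it is only a small one: the constant $3$ is tight enough near $\tau=1$ that the sign of $\phi'$ has to be verified explicitly.

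Adding the two tail bounds, each at most $e^{-\tau^2\mu/3}$, gives the factor $2$ and the claimed estimate. Alternatively, since this is the textbook Chernoff bound, one could simply cite a standard reference such as Janson, \L uczak and Ruci\'nski or Alon and Spencer.
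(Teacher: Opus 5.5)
Your proof is correct and complete. The moment generating function bound $\mathbb E[e^{\lambda X}]\le\exp(\mu(e^\lambda-1))$ and the choices $\lambda=\log(1\pm\tau)$ are right. Both calculus checks are also right: $\phi'(\tau)=\log(1+\tau)-2\tau/3$ increases and then decreases on $[0,1]$, with $\phi'(0)=0$ and $\phi'(1)=\log 2-2/3>0$. The degenerate case $\mu=0$ is trivial.

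The paper, by contrast, gives no proof at all. It lists this lemma among its ``standard estimates'', alongside Markov's and Bernoulli's inequalities, and uses it as a black box without a citation. So you take a genuinely different route: you supply the textbook Chernoff derivation that the paper takes for granted. Citing keeps the paper short. Your argument makes the estimate self-contained and shows that the upper tail is the binding one, since the lower tail even gives exponent $\tau^2\mu/2$.

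The lemma is used only in Lemma~\ref{lem:distribution}, where $\tau\le 1/4$. In that range $\phi''\ge 0$ on $[0,1/2]$ already gives $\phi\ge 0$. Your endpoint check $\log 2>2/3$ is needed only to cover the full stated range $0<\tau<1$.
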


\begin{lemma}[Markov's inequality]\label{lem:markov}
If $X\ge0$ is a random variable, then for every $t>0$,
$$
\mathbb P[X\ge t]\le\frac{\mathbb E[X]}{t}.
$$
\end{lemma}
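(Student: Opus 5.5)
The statement immediately above is Markov's inequality (Lemma~\ref{lem:markov}), a standard fact. I would prove it by integrating an indicator bound.

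Fix $t>0$ and let $A=\{X\ge t\}$. Since $X\ge 0$ everywhere, I would check the pointwise inequality
$$
t\,\mathbf 1_A\le X .
$$
On $A$ the left side equals $t$, which is at most $X$. Off $A$ the left side is $0$, which is at most $X$ by nonnegativity. This is the only place the hypothesis $X\ge0$ is used, and it is essential there.

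Next I would take expectations. Both sides are nonnegative, so expectation is well defined, possibly $+\infty$, and it is monotone. This gives
$$
t\,\mathbb P[A]=\mathbb E[t\mathbf 1_A]\le\mathbb E[X].
$$
Dividing by $t>0$ yields $\mathbb P[X\ge t]\le \mathbb E[X]/t$. If $\mathbb E[X]=\infty$, the bound holds trivially.

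I expect no real obstacle. The only care needed is to invoke monotonicity of expectation for nonnegative variables, so that no integrability assumption is required. In this paper $X$ will typically be a finite sum of indicators, where everything is finite anyway.
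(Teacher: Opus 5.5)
Your proof is correct. The pointwise bound $t\,\mathbf 1_{\{X\ge t\}}\le X$ followed by monotonicity of expectation is the standard argument; the paper does not prove this at all and simply quotes it as a standard estimate. As a minor aside, the paper applies it to $Z_m(v)$ in Lemma~\ref{lem:sequence}, a nonnegative variable taking finitely many values rather than a sum of indicators, but your argument covers that case without change.
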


% \begin{lemma}[Azuma's Inequality  {\cite[Chapter 7]{alonspencerbook}}]
% \label{lem:azuma_hoeffding}
% Let $Z_0,Z_1,\dots,Z_n$ be a martingale, and suppose that for each $i\in [n]$, $|Z_i-Z_{i-1}|\le c_i$ almost surely. Then for any $\lambda>0$,
% \[
% \mathbb P\left[Z_n-Z_0\ge \lambda\right] \le \exp\left(-\frac{\lambda^2}{2\sum_{i=1}^n c_i^2}\right).
% \]
% Since $-Z_0,-Z_1,\dots,-Z_n$ is also a martingale,
% \[
% \mathbb P\left[Z_n-Z_0\le -\lambda\right] \le \exp\left(-\frac{\lambda^2}{2\sum_{i=1}^n c_i^2}\right),
% \]
% and consequently
% \[
% \mathbb P\left[\left|Z_n-Z_0\right|\ge \lambda\right] \le 2\exp\left(-\frac{\lambda^2}{2\sum_{i=1}^n c_i^2}\right).
% \]
% \end{lemma}

\begin{lemma}[Bernoulli's inequality]\label{lem:bernoulli}
For every integer $q\ge1$ and every real $x\le1$,
$$
(1-x)^q\ge1-qx.
$$
In particular, if $0\le x<1/(q+1)$, then
$(1-x)^{q+1}\ge1-(q+1)x>0$.
\end{lemma}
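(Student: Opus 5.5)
The statement to prove is Bernoulli's inequality: for an integer $q\ge1$ and real $x\le1$ we have $(1-x)^q\ge 1-qx$, with the strict-positivity refinement when $0\le x<1/(q+1)$. I would argue by induction on $q$, using only that $1-x\ge0$ under the hypothesis $x\le 1$.

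\textbf{Base case.} For $q=1$ both sides equal $1-x$.

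\textbf{Induction step.} Assume $(1-x)^q\ge 1-qx$. Since $1-x\ge0$, multiplying both sides by $1-x$ preserves the inequality, so
$$
(1-x)^{q+1}\ge(1-qx)(1-x)=1-(q+1)x+qx^2\ge 1-(q+1)x,
$$
where the last step uses $qx^2\ge0$.

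\textbf{The refinement.} Apply the inequality with exponent $q+1$, which is allowed because $0\le x<1/(q+1)\le 1$. This gives $(1-x)^{q+1}\ge 1-(q+1)x$. The right-hand side is strictly positive precisely because $x<1/(q+1)$.

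\textbf{Where care is needed.} The only delicate point is the sign condition in the induction step. The multiplication is legitimate only because $1-x\ge0$; the hypothesis $x\le1$ is exactly what guarantees this. The left-hand side $1-qx$ may be negative, but that does not matter, since we multiply by the nonnegative factor $1-x$ rather than divide or take roots.
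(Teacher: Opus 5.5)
Your induction is correct and complete, including the observation that $0\le x<1/(q+1)$ forces $x\le 1$, so the exponent-$(q+1)$ case of the inequality applies. The paper gives no proof and simply lists this among its standard estimates; your argument is the standard one behind it. One cosmetic slip: in your final remark, $1-qx$ is the right-hand side of the induction hypothesis, not the left-hand side.
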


\begin{lemma}\label{lem:mvt}
For $q\ge1$ and $0\le x<1/(q+1)$,
$$
(1-x)^{-q}\le1+\frac{qx}{1-(q+1)x}.
$$
\end{lemma}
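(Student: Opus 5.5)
The inequality $(1-x)^{-q}\le 1+\frac{qx}{1-(q+1)x}$ for $q\ge1$ and $0\le x<1/(q+1)$ follows from the mean value theorem applied to $\phi(t)=(1-t)^{-q}$ on $[0,x]$. For $x=0$ both sides equal $1$, so assume $0<x<1/(q+1)$. The function $\phi$ is smooth on $[0,1)$ with $\phi'(t)=q(1-t)^{-(q+1)}$. Hence there is $\xi\in(0,x)$ with
$$
(1-x)^{-q}-1=\phi(x)-\phi(0)=qx(1-\xi)^{-(q+1)}\le qx(1-x)^{-(q+1)},
$$
because $\phi'$ is increasing on $[0,1)$.

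The next step removes the power $-(q+1)$ using Lemma~\ref{lem:bernoulli}. Its second clause gives $(1-x)^{q+1}\ge 1-(q+1)x>0$ for $0\le x<1/(q+1)$. Taking reciprocals of these positive quantities yields $(1-x)^{-(q+1)}\le \bigl(1-(q+1)x\bigr)^{-1}$. Substituting into the previous display gives
$$
(1-x)^{-q}\le 1+\frac{qx}{1-(q+1)x},
$$
which is the claim.

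Lemma~\ref{lem:bernoulli} is stated only for integer $q$, and that is the only point needing care. If $q$ here is meant to be an integer, the argument is complete as written. If real $q\ge1$ is intended, Bernoulli's inequality $(1-x)^{r}\ge 1-rx$ for real $r\ge1$ and $x\le1$ still holds, by convexity of $t\mapsto(1-t)^r$ on $(-\infty,1]$ (its tangent line at $0$ lies below it), and it can be used in place of Lemma~\ref{lem:bernoulli} with $r=q+1$. Apart from this, the proof consists of the two one-line estimates above, so I expect no real obstacle.
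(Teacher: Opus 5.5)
Your proposal is correct and follows the paper's proof essentially step for step: the mean value theorem for $(1-t)^{-q}$ on $[0,x]$ bounds $(1-x)^{-q}-1$ by $qx(1-x)^{-(q+1)}$, and the second clause of Lemma~\ref{lem:bernoulli} then gives $(1-x)^{-(q+1)}\le 1/(1-(q+1)x)$. Your separate treatment of $x=0$ and your remark on real $q$ are harmless additions that the paper does not need.
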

\begin{proof}
For $0\le x<1/(q+1)$, apply the mean value theorem to $f(t)=(1-t)^{-q}$ on $[0,x]$. There exists $c\in(0,x)$ such that
$
(1-x)^{-q}-1 = qx(1-c)^{-q-1}.
$
Since $0<c<x$, we have $(1-c)^{-q-1}\le (1-x)^{-q-1}$, so
$
(1-x)^{-q}-1\le qx(1-x)^{-q-1}.
$
Bernoulli's inequality gives $(1-x)^{q+1}\ge1-(q+1)x>0$, and therefore
$(1-x)^{-q-1}\le1/(1-(q+1)x)$. Combining these two estimates, the claim follows.
\end{proof}

\subsection{Equivalence properties}\label{subsec:equivalence}

In this subsection, we establish an equivalence principle that allows us to reduce general cycle packing problems in digraphs to the setting of sufficiently large strong out-regular digraphs.

\begin{lemma} \label{lem:large-order-equivalence}
Fix integers $k\ge1$ and $d\ge1$. The following statements are equivalent.
\begin{enumerate}
    \item[\textup{(i)}] Every finite simple loopless digraph $D$ satisfying $\delta^+(D)\ge d $ contains $k$ disjoint directed cycles.
    \item[\textup{(ii)}] There exists an integer $n_0=n_0(k,d)$ such that every finite simple loopless digraph $D$ satisfying $|V(D)|\ge n_0 $ and $\delta^+(D)\ge d $ contains $k$ disjoint directed cycles.
    \item[\textup{(iii)}] There exists an integer $n_0=n_0(k,d)$ such that every strong $d$-out-regular digraph $D$ satisfying $|V(D)|\ge n_0 $ contains $k$ disjoint directed cycles.
\end{enumerate}
The same equivalence holds if all digraphs above are required to be oriented graphs, that is 2-cycles are not allowed.
\end{lemma}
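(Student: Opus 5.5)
The implications (i)$\Rightarrow$(ii)$\Rightarrow$(iii) are immediate: (ii) is (i) restricted to large digraphs, and a strong $d$-out-regular digraph is a special digraph with $\delta^+\ge d$. All the work is in (iii)$\Rightarrow$(i). For that I will argue by contradiction. Suppose $D$ is a counterexample to (i), so $\delta^+(D)\ge d$ and $\nu(D)<k$. From $D$ I will build a strong $d$-out-regular digraph of order at least $n_0$ that still has fewer than $k$ disjoint cycles.

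\emph{Reduction to a strong out-regular counterexample.} First, deleting out-arcs so that every vertex keeps exactly $d$ of them preserves $\delta^+\ge d$ and cannot create cycles, so we may assume $D$ is $d$-out-regular. Next, take a terminal strong component $T$ of $D$, one with no arcs leaving it. Every vertex of $T$ has all its out-neighbours inside $T$, so $D[T]$ is strong, $d$-out-regular and has $\nu<k$. We may therefore assume $D$ is strong. The same operations keep us inside oriented graphs, since they only delete arcs and vertices, so the oriented variant follows verbatim.

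\emph{Blowing up while controlling $\nu$.} The key step is to produce arbitrarily large strong $d$-out-regular digraphs with $\nu<k$ from a fixed small one. My plan is to replace a vertex by an acyclic gadget. Choose a vertex $v$ of $D$ with in-neighbours $u_1,\dots,u_s$ and out-neighbours $w_1,\dots,w_d$. Replace $v$ by $m$ copies $v_1,\dots,v_m$. Each copy $v_i$ gets out-arcs to all of $w_1,\dots,w_d$, and each in-arc $u_jv$ is redirected to $v_1$. To keep the new vertices reachable, I will also add a path structure: redirect $v_i$'s out-arc to $w_1$ into an arc $v_iv_{i+1}$ for $i<m$. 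This keeps out-degree $d$, provided $d\ge2$ so the copies still reach $D$ through $w_2$. The added structure is an acyclic chain $v_1\to v_2\to\dots\to v_m$ hanging off $v$. Every cycle of the new digraph $D'$ either avoids all copies or passes through a consecutive segment $v_1\cdots v_i$ and then into $D-v$. Contracting that segment to $v$ maps it to a cycle of $D$. Disjoint cycles in $D'$ use disjoint segments, but every segment contains $v_1$, so at most one cycle meets the copies. Hence $\nu(D')\le\nu(D)<k$. $D'$ is strong: every $v_i$ reaches $D-v$ via $w_2$, and $v_1$ reaches every $v_i$ along the chain. Taking $m$ large gives order at least $n_0$, contradicting (iii).

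\emph{Main obstacle.} The delicate part is the case $d=1$, and more generally making sure out-regularity, strong connectivity and simplicity all survive the gadget without creating new disjoint cycles. For $d=1$ a strong $1$-out-regular digraph is a single cycle, and the trivial argument ($k=1$, where (i) holds outright) finishes it. For $d\ge2$ I must check that $w_1\ne v_{i+1}$ collisions cause no parallel arcs. In the oriented case I must check that no 2-cycle $v_iv_{i+1}v_i$ appears; it does not, since the copies have no arcs back to earlier copies. If the chain idea fails at some point, my fallback is to take disjoint copies of $D$ and splice them cyclically. I would redirect one arc $x\to y$ of copy $j$ to $y$ in copy $j+1$, which makes the result strong. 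I would then bound the disjoint cycles by $\nu$ of the quotient map, though that bound is less clean because cycles can wind around several copies.
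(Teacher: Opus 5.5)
Your proof is correct and is essentially the paper's: after the same reduction (trim to $d$-out-regular, pass to a terminal strong component), your chain $v_1\to v_2\to\cdots\to v_m\to w_1$, with every $v_i$ also pointing to $w_2,\dots,w_d$, is exactly the paper's operation of subdividing the arc $vw_1$ by new vertices that copy the other out-arcs of $v$, and your bound on $\nu$ (every cycle meeting the new vertices contains $v_1$ and compresses to a cycle of $D$) is the paper's argument. Three small points: the chain already works for $d=1$, since each $v_i$ reaches $w_1$ through $v_m$, so the separate $d=1$ remark is unnecessary, and it also names the wrong case, because a counterexample with $d=1$ forces $k\ge2$; for strong connectivity, say explicitly that paths of $D$ through $v$ lift by routing through $v_1$ and the chain, since $D-v$ itself need not be strong; and in the oriented case, 2-cycles through $v_1$ are excluded because $N^+(v)\cap N^-(v)=\varnothing$ in $D$.
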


\begin{proof}
The implications $\textup{(i)}\Longrightarrow\textup{(ii)}\Longrightarrow\textup{(iii)}$ follow directly from definitions. We prove that $\textup{(iii)}\Longrightarrow\textup{(i)}$. Assume statement (i) does not hold. Then there exists a finite simple loopless digraph $D$ such that $\delta^+(D)\ge d$ and $\nu(D)<k$.

First retain exactly $d$ outgoing arcs at every vertex of $D$, deleting all other arcs, and denote the resulting spanning subdigraph by $D'$. Then $d_{D'}^+(v)=d$ for all $v\in V(D')$, while deleting arcs cannot create new directed cycles, and hence $\nu(D')\le \nu(D)<k$. The strong components of $D'$ admit an acyclic ordering. Let $H$ be a terminal strong component in this ordering.
  Since no arc of $D'$ leaves $H$, every vertex of $H$ has all its $d$ outgoing arcs inside $H$. Consequently $H$ is strong and $d$-out-regular, and $\nu(H)\le \nu(D')<k$. As $d\ge1$, the digraph $H$ contains an arc.

We now describe an order increasing operation. Let $G$ be any finite simple loopless digraph with an arc $uv$, and let $m\ge1$. Put $A=N_G^+(u)\setminus\{v\}$. Introduce new vertices $x_1,\ldots,x_m$, delete $uv$, keep every other old arc, and add $ux_1,\ x_1x_2,\ldots,\ x_{m-1}x_m,\ x_mv$, together with $x_iw$ for $i\in[m],\ w\in A$. Denote the resulting digraph by $G_m$. Every old vertex keeps its out-degree. Moreover, $d_{G_m}^+(x_i)=d_G^+(u)$ for $i\in[m]$, and therefore $\delta^+(G_m)=\delta^+(G)$.

We claim that $\nu(G_m)=\nu(G)$. Indeed, every directed cycle of $G_m$ meeting a new vertex must contain $u$. Such a cycle contains a consecutive segment $ux_1x_2\cdots x_iw$ for some $w\in N_G^+(u)$. Replacing this segment by the old arc $uw$ gives a directed cycle of $G$ with exactly the same old vertices. Since every cycle meeting the new vertices contains $u$, at most one member of a disjoint cycle family can meet the new vertices. Thus an entire packing in $G_m$ can be compressed simultaneously to a packing of the same cardinality in $G$, proving $\nu(G_m)\le \nu(G)$. Conversely, take a maximum packing in $G$. At most one of its members uses the arc $uv$. If such a member exists, replace its occurrence of $uv$ by $ux_1x_2\cdots x_mv$. All other members remain unchanged, and the inserted vertices are new, so disjointness is preserved. Hence $\nu(G_m)\ge \nu(G)$. 

The same construction preserves strong connectivity: every old directed path can be lifted by replacing $uv$, when necessary, by the new path, while every new vertex is reachable from $u$ and can reach $v$. By construction, the new digraph $G_m$ is an oriented graph whenever $G$ is an oriented graph. Applying the operation above to an arc of $H$, we obtain, for every integer $N\ge |V(H)|$, a strong $d$-out-regular digraph $H_N$ of order exactly $N$ with $\nu(H_N)=\nu(H)<k$. This produces counterexamples to statement (iii) with arbitrarily large order, contradicting statement (iii). Hence $\textup{(iii)}\Rightarrow\textup{(i)}$. This proves the equivalence of (i)--(iii).
\end{proof}

As an additional consequence of the construction, a single finite counterexample automatically produces counterexamples of every larger order. %suppose that (i) fails and $D$ is any counterexample. For every integer $N\ge |V(D)|$, applying the order increasing operation directly to an arc of $D$ yields a counterexample $D_N$ of order $N$ satisfying $\delta^+(D_N)\ge d$ and $\nu(D_N)=\nu(D)<k$. In other words, 

\section{The order of a minimum counterexample}\label{sec:order}

Fix integers $k\ge2$ and $r\ge2k$. In this section, a \textit{counterexample} is
a digraph $D$ with $\delta^+(D)\ge r$ and $\nu(D)<k$. The parameters
$r$ and $k$ remain fixed whenever minimum order is used.

The contraction argument in Lemma~\ref{lem:structure} and the
independent set count in Proposition~\ref{prop:order} follow
Alon~\cite[Claims~1 and~2]{alonJCTB68}; see also
Buci\'c~\cite[Propositions~3.1 and~3.2]{bucicDM341}.
We give the arguments for the fixed pair $(r,k)$ used here.

Lemma~\ref{lem:loops} justifies contractions that create loops,
without an assumption on $f(k-1)$.
Contraction may create a loop. The next lemma allows us to use such a
contraction without changing the minimum possible order of a
counterexample. For a digraph $G$ in which loops are allowed, count a
loop once in the out-degree and regard it as a cycle of length one.
Write $\nu_L(G)$ for the maximum number of disjoint cycles under
this convention.

\begin{lemma}\label{lem:loops}
If a loopless counterexample exists, its minimum possible order equals
the minimum possible order of a counterexample in which loops are
allowed.
\end{lemma}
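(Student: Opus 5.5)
One inequality is immediate. A loopless counterexample is also a counterexample in the loops-allowed class: it has no loops, so the out-degrees and $\nu_L=\nu$ are unchanged. Hence the minimum order $m_L$ over the loops-allowed class satisfies $m_L\le m$, where $m$ is the minimum order of a loopless counterexample (assumed to exist). The work is in showing $m\le m_L$. For this, take a counterexample $G$ with loops of order $m_L$, so $\delta^+(G)\ge r$ and $\nu_L(G)<k$. I will produce a loopless counterexample of order at most $|V(G)|$.

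Let $L$ be the set of vertices carrying a loop. If $L=\emptyset$ we are done, so assume not. Each loop is a cycle of length one and vertices of $L$ are pairwise disjoint, so $|L|\le \nu_L(G)\le k-1$. The plan is to delete $L$ and set $G'=G-L$. Any family of disjoint cycles in $G'$ together with the $|L|$ loops gives disjoint cycles in $G$. Therefore $\nu(G')\le \nu_L(G)-|L|<k-|L|$.

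Each vertex of $G'$ loses at most $|L|$ out-neighbours, so $\delta^+(G')\ge r-|L|\ge 2(k-|L|)+(r-2k)$. However, $G'$ is a counterexample for the smaller parameter $k'=k-|L|$ and $r'=r-|L|$, not for $(r,k)$. This is the main obstacle. The statement insists no assumption on $f(k-1)$ is used, so I cannot just invoke a smaller case. Instead I would avoid deletion and try to remove loops one at a time, keeping order and the parameters.

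The alternative step is as follows. For a looped vertex $v$, remove its loop; $v$ then loses one unit of out-degree. Compensate by adding an arc $vw$ for some $w\notin N^+(v)\cup\{v\}$. Such a $w$ exists unless $v$ already dominates all other vertices, in which case $d^+(v)\ge r$ already holds without the loop if $|V(G)|-1\ge r$. The danger is that the new arc creates extra disjoint cycles. So choose the modification to control $\nu$. Removing the loop loses the cycle $\{v\}$, which reduces the cycles available through $v$. Any new cycle using $vw$ passes through $v$, so at most one cycle in a packing uses $v$. A packing in the new digraph therefore yields a packing in $G$ of the same size, by replacing the unique cycle through $v$ with the loop at $v$. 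Hence $\nu_L$ does not increase, and in fact every arc out of $v$ can be added freely.

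So the concrete construction is this: at each looped vertex $v$, delete the loop and add arcs from $v$ to all other vertices. The out-degree becomes $|V(G)|-1$, and every cycle through $v$ is traded for the loop in $G$, so $\nu$ is preserved. It remains to check $|V(G)|-1\ge r$. If $|V(G)|\le r$, then every vertex has out-degree at least $r$ using at most $|V(G)|-1$ non-loop arcs plus its loop. This forces a loop at every vertex and $|V(G)|\ge \dots$; in that case the $|V(G)|$ loops give $\nu_L(G)=|V(G)|\ge r\ge 2k>k$, a contradiction. This yields a loopless counterexample of the same order, proving $m\le m_L$.
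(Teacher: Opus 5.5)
Your proof is correct and is essentially the paper's argument. The paper takes an arc-maximal loops-allowed counterexample of order $m_L$ and uses $\nu_L(G)=|L|+\nu(G-L)$ to force every looped vertex to dominate all others; you build that saturation directly by trading the unique cycle through $v$ for its loop, and both arguments then rule out order at most $r$ by the same all-loops count before deleting the loops. The only thing to tidy is the placeholder ``$|V(G)|\ge\dots$'', which should read $|V(G)|\ge r$, since a vertex has out-degree at most $|V(G)|$ even counting its loop.
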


\begin{proof}Recall that a counterexample is a digraph $D$ with $\delta^+(D)\ge r$ and $\nu(D)<k$. 
Write $m_0$ for the minimum order of a loopless counterexample and
$m_L$ for the minimum order of a counterexample in which loops are
allowed. Thus it suffices to show $m_0=m_L$.
The inequality $m_L\le m_0$ is immediate, since every loopless
digraph is also a digraph in which loops are allowed.

It remains to prove $m_0\le m_L$. Let $L$ be the set of looped
vertices of a digraph $G$. Then $\nu_L(G)=|L|+\nu(G-L)$.  %Indeed, at most $|L|$ cycles in a disjoint family meet $L$, and all remaining cycles lie in $G-L$, giving $\nu_L(G)\le |L|+\nu(G-L)$; the reverse inequality follows by taking the loops on $L$ together with a maximum family in $G-L$. 
Choose a counterexample of minimum order $m_L$ among digraphs in
which loops are allowed, and, subject to this, with as many arcs as
possible.  We shall use this counterexample to build a loopless counterexample
of the same order, which will give $m_0\le m_L$.

We claim that every looped vertex is adjacent in both
directions to every other vertex. Otherwise, if some $v\in L$ has no
arc to $w$ with $w\ne v$, adding that arc would leave
$\nu_L(G)=|L|+\nu(G-L)$ unchanged, since neither $L$ nor
$G-L$ is affected, while preserving the out-degree condition,
contradicting the choice of $G$.

Since every out-degree is at most $m_L$, we have $m_L\ge r$. Equality
$m_L=r$ would force every possible arc and every loop to be present,
giving $\nu_L(G)=r\ge k$, contrary to $G$ being a counterexample.
Thus $m_L\ge r+1$. Now delete all loops of $G$, retaining every vertex and every arc
between distinct vertices. A formerly looped vertex has out-degree
$m_L-1\ge r$, while every other out-degree is unchanged. Deleting arcs
cannot create a family of $k$ disjoint cycles. The resulting digraph
is therefore a loopless counterexample of order $m_L$, so
$m_0\le m_L$ and this proves the claim.
\end{proof}

The contraction argument below follows Thomassen~\cite{thomassenC3} (also can be found in Alon~\cite[Claims~1 and~2]{alonJCTB68}); we restate it for the fixed pair $(r,k)$ used here. %\jbjcomment{Thomassen already made these observations in his 1983 paper so we should refer to his paper instead, I think}

\begin{lemma}\label{lem:structure}
If a counterexample exists, then there exists a counterexample of minimum
order that is $r$-out-regular. In any such digraph $D$, the endvertices
of every arc have a common in-neighbour, and $D[N^-(v)]$ contains a cycle for every vertex $v$.
\end{lemma}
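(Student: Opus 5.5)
Take a counterexample $D$ of minimum order, and among those choose one with as few arcs as possible. If some vertex had out-degree larger than $r$, we could delete one of its out-arcs. The out-degree condition would still hold and $\nu$ would not increase, so the arc-minimality would be contradicted. Hence this $D$ is $r$-out-regular, which gives the first claim. For the remaining claims, fix any $r$-out-regular counterexample $D$ of minimum order $m$. Every argument below contracts part of $D$ to get a smaller digraph, possibly with loops, that still satisfies the out-degree condition. Lemma~\ref{lem:loops} then says it cannot be a counterexample in the loop-allowed sense, so it has $k$ disjoint cycles, counting loops as cycles. Lifting these back to $D$ gives the contradiction.

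\textbf{Common in-neighbour.} Let $uv$ be an arc and contract it to a new vertex $w$. The out-neighbourhood of $w$ is $(N^+(u)\cup N^+(v))\setminus\{u,v\}$, with a loop at $w$ if $vu$ is an arc. Out-degrees of $w$ are at least $r$: we have $N^+(v)\setminus\{u\}$ of size at least $r-1$, plus... To avoid that case analysis, I would instead argue that the out-degree of $w$ is at least $d^+(v)=r$ when we keep $v$'s out-arcs, except possibly the arc $vu$, which becomes a loop. So $w$ is fine.

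An old vertex $x$ loses out-degree only if both $u,v\in N^+(x)$, since the two arcs merge into one arc $xw$. So if $u$ and $v$ had no common in-neighbour, the contracted digraph $D/uv$ would have minimum out-degree at least $r$ and order $m-1$. By minimality and Lemma~\ref{lem:loops}, $D/uv$ contains $k$ disjoint cycles, where a loop at $w$ counts as a cycle. Only one of these cycles can pass through $w$. A cycle $\dots x\,w\,y\dots$ is lifted by using $x\to u$ or $x\to v$ (whichever arc exists) and leaving from $u$ or $v$ toward $y$. If it enters at $u$ and must leave from $v$, insert the arc $uv$. If it enters at $v$ and leaves from $u$, either it can leave from $v$ directly, or $vu$ exists and we route through it. A loop at $w$ corresponds to the 2-cycle $uvu$. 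The other cycles avoid $u$ and $v$. This yields $k$ disjoint cycles in $D$, a contradiction.

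\textbf{$D[N^-(v)]$ contains a cycle.} Suppose $D[N^-(v)]$ is acyclic. Contract all of $N^-(v)\cup\{v\}$? That is too aggressive. The plan I expect to follow (Thomassen's) is: delete $v$ and, for each $x\in N^-(v)$, add arcs from $x$ to every vertex of $N^+(v)$, discarding loops and duplicates. Each in-neighbour $x$ loses the arc $xv$. It regains out-degree unless $N^+(v)\subseteq N^+(x)\cup\{x\}$. To guarantee this, choose a sink $x_0$ of the acyclic digraph $D[N^-(v)]$ and use it carefully. Equivalently, contract the arc $x_0v$, which has a common in-neighbour by the previous part. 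I would rather contract $v$ into a sink $s$ of $D[N^-(v)]$. Vertices $x\in N^-(v)$ with $s\in N^+(x)$ then lose out-degree, so the clean route is the first one: delete $v$ and join $N^-(v)$ to $N^+(v)$. A cycle using a new arc $x y$ lifts to $x v y$. Disjoint cycles of the new digraph use at most one new arc through the "virtual $v$" only if we ensure that. This is the main obstacle: two disjoint cycles could both use new arcs, and the lifts would collide at $v$.

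To fix this, I would use the acyclicity of $D[N^-(v)]$ to orient the construction. Order $N^-(v)$ topologically and add the new arcs only from a single designated sink $s$ of $D[N^-(v)]$, identifying $v$ with $s$ (contracting the arc $sv$). Then I check the out-degrees. A vertex $x\in N^-(v)$ with $x\to s$ loses one arc, because $xv$ and $xs$ merge. Since $s$ is a sink of the acyclic digraph $D[N^-(v)]$, such $x$ exist, so this needs care. I expect the resolution to be choosing which merged arcs to retain together with the minimality, and that is the step I would work out first.
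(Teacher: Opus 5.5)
Your first step is fine: taking minimum order and then as few arcs as possible forces $r$-out-regularity. The last claim is the main gap. You identify obstacles for every contraction you try and resolve none of them, so $D[N^-(v)]$ containing a cycle is never established. No new contraction is needed, because the claim follows directly from the common in-neighbour property:
\begin{itemize}
\item First, $N^-(v)\neq\varnothing$. Otherwise $D-v$ keeps every remaining out-degree and has no more disjoint cycles, so it is a smaller counterexample.
\item For each $u\in N^-(v)$, the arc $uv$ has a common in-neighbour $x$. Then $x\in N^-(v)$ and $x$ sends an arc to $u$.
\item Hence $D[N^-(v)]$ is non-empty with minimum in-degree at least one. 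Following in-arcs backwards until a vertex repeats gives a cycle.
\end{itemize}
You were one word away from this. You chose a \emph{sink} $s$ of the acyclic digraph $D[N^-(v)]$ and noted that the previous part gives $x\in N^-(v)$ with $x\to s$. Had you chosen a \emph{source}, that observation would already be the contradiction.

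The common in-neighbour argument also has a hole in the lifting step. You let $w$ inherit the out-arcs of both $u$ and $v$. Then a cycle $\dots x\,w\,y\dots$ of $D/uv$ may have $x\in N^-(v)\setminus N^-(u)$ and $y\in N^+(u)\setminus N^+(v)$. If moreover $vu$ is not an arc, this cycle has no lift to $D$. So your dichotomy ``either it can leave from $v$ directly, or $vu$ exists'' is not exhaustive, and $\nu_L(D/uv)\ge k$ yields no contradiction.

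The fix, which is what the paper does, is to delete every out-arc of $u$ other than $uv$ before identifying $u$ and $v$. Then $w$ inherits exactly the $r$ out-arcs of $v$, with $vu$ becoming a loop if present, so $d^+(w)=r$. The no-common-in-neighbour hypothesis still ensures that no other vertex loses an out-neighbour. Every cycle through $w$ now leaves along an arc of $v$, so it lifts as follows:
\begin{itemize}
\item if it entered at $u$, replace $w$ by the path $uv$;
\item if it entered at $v$, replace $w$ by $v$;
\item a loop at $w$ lifts to the 2-cycle $uvu$.
\end{itemize}
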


\begin{proof}
Choose a counterexample of minimum order, and retain exactly $r$
outgoing arcs at each vertex. The resulting spanning subdigraph $D$
is still a counterexample of the same order. By
Lemma~\ref{lem:loops}, $D$ is also of minimum order among
counterexamples in which loops are allowed.

We first show that the endvertices of every arc have a common
in-neighbour. Suppose, for a contradiction, that an arc $uv$ has no
common in-neighbour. Delete all outgoing arcs of $u$ except $uv$, and
identify $u$ and $v$ to a single vertex $z$. The outgoing arcs of $z$
are those inherited from $v$; we retain a loop at $z$ precisely when
$vu$ is also an arc of $D$, and discard the loop arising solely from $uv$.
Parallel arcs are identified. Since $v$ has $r$ distinct out-neighbours,
$z$ has $r$ distinct outgoing arcs, counting a possible loop. By the
assumption that $uv$ has no common in-neighbour, no vertex outside
$\{u,v\}$ loses an out-neighbour. Hence the resulting digraph $D'$ has
minimum out-degree at least $r$.

We claim that every family of disjoint cycles in $D'$ lifts to a
family of the same size in $D$. Cycles avoiding $z$ are unchanged. A
loop at $z$ is lifted to a 2-cycle $uvu$. For a longer cycle through
$z$, replace $z$ by the directed path $uv$ if the entering arc originally
terminated at $u$, and by $v$ alone if it terminated at $v$. Since a
disjoint family contains at most one cycle through $z$, these
replacements preserve disjointness. Thus $\nu_L(D')\le\nu(D)<k$,
so $D'$ is a counterexample of order smaller than $|D|$, contradicting
the choice of $D$. Hence the endvertices of every arc have a common
in-neighbour.

Finally, every vertex $v$ has an in-neighbour, since  otherwise deleting $v$
would leave all remaining out-degrees unchanged and yield a smaller
counterexample. For $u\in N_D^-(v)$, a common in-neighbour of  $u$ and $v$ lies in $N_D^-(v)$ and points to $u$. Therefore the
non-empty digraph $D[N_D^-(v)]$ has minimum in-degree at least one, and
following incoming arcs until a vertex repeats produces a cycle.
\end{proof}

\begin{proposition}\label{prop:order}
If a counterexample exists, one of minimum order has at most
$(k-1)(r^2-r+1)$ vertices. In particular, when $r=3k$, its order is
less than $9k^3$.
\end{proposition}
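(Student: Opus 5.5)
Take $D$ to be a minimum-order counterexample that is $r$-out-regular, as provided by Lemma~\ref{lem:structure}; write $n=|D|$. The goal is an independent set of size about $n/(r^2-r+1)$ whose in-neighbourhoods each contain a cycle. If the in-neighbourhoods are pairwise disjoint, this yields many disjoint cycles.

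Define an auxiliary undirected graph $H$ on $V(D)$, with $u$ and $w$ adjacent in $H$ when $u\ne w$ and $N^-(u)\cap N^-(w)\neq\emptyset$. Equivalently, $u,w$ are adjacent when some vertex $x$ sends arcs to both. The number of ordered pairs $(x,\{u,w\})$ with $u,w\in N^+(x)$ is $n\binom{r}{2}$, so $|E(H)|\le n\binom r2=nr(r-1)/2$. Hence the average degree of $H$ is at most $r(r-1)=r^2-r$.

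By Tur\'an's theorem, or by greedy selection of a minimum-degree vertex and deletion of its neighbourhood (Caro--Wei), $H$ has an independent set $I$ with
\[
|I|\ge \frac{n}{\bar d(H)+1}\ge \frac{n}{r^2-r+1}.
\]
For distinct $u,w\in I$ the sets $N^-(u)$ and $N^-(w)$ are disjoint. By Lemma~\ref{lem:structure}, each $D[N^-(v)]$ with $v\in I$ contains a cycle. These cycles are therefore pairwise vertex-disjoint, so $\nu(D)\ge |I|$.

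Since $\nu(D)\le k-1$, we get $n/(r^2-r+1)\le k-1$, that is, $n\le (k-1)(r^2-r+1)$. For $r=3k$ this gives $(k-1)(9k^2-3k+1)=9k^3-12k^2+4k-1<9k^3$ because $k\ge2$.

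The main point requiring care is the edge count for $H$. A single pair $\{u,w\}$ may have several common in-neighbours, which is why only the upper bound $n\binom r2$ holds. This inequality is in the right direction, so it suffices. The Tur\'an-type bound then needs only the average degree, so no further obstacle is expected.
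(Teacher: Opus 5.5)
Your proposal is correct and follows essentially the same route as the paper. It uses the same auxiliary graph on $V(D)$, joining distinct vertices that have a common in-neighbour, the same count $e\le n\binom r2$ coming from $r$-out-regularity, and the same appeal to Lemma~\ref{lem:structure} to find a cycle inside each in-neighbourhood of an independent set. The only cosmetic difference is that you use the average-degree (Caro--Wei) bound $\alpha\ge n/(\bar d+1)$ directly, whereas the paper applies Tur\'an's theorem to the complement to get $\alpha(J)\ge n^2/(n+2e(J))$, which is the same bound.
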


\begin{proof}
Choose $D$ as in Lemma~\ref{lem:structure}, and put $n=|V(D)|$.
Construct an undirected graph $J$ on $V(D)$ by joining distinct vertices
$u$ and $v$ when $N^-(u)\cap N^-(v)\ne\varnothing$.
An independent set in $J$ corresponds to pairwise disjoint
in-neighbourhoods in $D$. Each of these in-neighbourhoods contains
a cycle, so the independence number of $J$ satisfies
$\alpha(J)\le\nu(D)\le k-1$.

Every vertex $w\in V(D)$ has out-degree $r$, so $w$ lies in exactly $r$
of the sets $N_D^-(v)$. For any two of these in-neighbourhoods, their
intersection contains $w$, and hence the corresponding two vertices of
$D$ are adjacent in $J$. Thus $w$ accounts for at most $\binom r2$
edges of $J$. Summing over all $n$ vertices of $D$, the total number of
incidences between vertices of $D$ and edges of $J$ is at most
$n\binom r2$. Every edge of $J$ is counted at least once in this sum, so
$e(J)\le n\binom r2$, equivalently $2e(J)\le nr(r-1)$.

Observe that the complement
$\bar{J}$ of $J$ contains no complete graph on $\alpha(J)+1$ vertices, so
Tur\'an's theorem gives $e(\bar{J})\le(1-1/\alpha(J))n^2/2$.
Since $e(J)+e(\bar{J})=\binom n2$, it follows that
$\alpha(J)\ge n^2/(n+2e(J))$. Hence
$$
 k-1\ge\alpha(J)\ge\frac{n^2}{n+2e(J)}\ge\frac{n}{r^2-r+1}.
$$
This proves the claimed bound, and $n<9k^3$ when $r=3k$.
\end{proof}

\section{Successive deletion of cycles}\label{sec:iteration}

We first prove $\nu^\ast(G)\ge\delta^+(G)/2$. The key step is to find a non-empty
proper part of the digraph whose boundary  %\jbjcomment{is it better to use 'closed out-neighborhood' instead of 'boundary'?} 
is small compared with the
weight outside it. The weighted ball-growing argument we use is a
directed, exact analogue of the region-growing technique of Garg,
Vazirani and Yannakakis~\cite{gargSJC25}; the directed version follows the
spreading metric framework of Even, Naor, Rao and Schieber~\cite{evenJACM47}.

%\jbjcomment{perhaps better to say that $G$ is an oriented graph. Otherwise $y$ is not a fractional feedback vertex set} 

\begin{lemma}\label{lem:boundary}
Let $G$ be a strong oriented graph   
with at least two vertices, and let $y$ be a
fractional feedback vertex set with $0<y_v<1/2$ for every vertex.
There are disjoint sets $U,S\subseteq V(G)$ such that $U$ is non-empty
and proper, every out-neighbour of a vertex of $U$ lies in $U\cup S$,
and $|S|\le 2y(V(G)\setminus U)< |V(G)\setminus U|$.
\end{lemma}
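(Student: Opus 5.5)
The plan is a region-growing argument with respect to the vertex-weighted distance induced by $y$. Fix a root $s\in V(G)$. For $v\in V(G)$ let $d(v)$ be the minimum of $y(V(P))$ over directed $s$--$v$ paths $P$, with both ends included; $d(v)$ is finite since $G$ is strong. For a real $t$ put $U_t=\{v: d(v)\le t\}$ and
$$
S_t=\{w\notin U_t:\ d(w)-y_w\le t\}.
$$
If $uw$ is an arc with $u\in U_t$ and $w\notin U_t$, then extending a shortest $s$--$u$ path by $w$ gives $d(w)\le d(u)+y_w\le t+y_w$. Hence $w\in S_t$, so every out-neighbour of $U_t$ lies in $U_t\cup S_t$, and $U_t$ and $S_t$ are disjoint. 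It therefore suffices to find $t$ with $U_t$ non-empty and proper and $|S_t|\le 2y(V(G)\setminus U_t)$.

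\textbf{Non-emptiness and properness.} Let $t\in[y_s,\,y_s+\tfrac12)$. Then $s\in U_t$. Since $G$ is strong with at least two vertices, $s$ has an in-neighbour $w\neq s$. A shortest $s$--$w$ path followed by the arc $ws$ is a cycle whose $y$-weight is exactly $d(w)$. Because $y$ is a fractional feedback vertex set, $d(w)\ge1$. Since $y_s,y_w<\tfrac12$, we get $d(w)-y_w>\tfrac12>t-y_s\ge0$; in fact $d(w)>t$, so $w\notin U_t$ and $U_t$ is proper.

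\textbf{Boundary versus outside weight.} I would use the potential
$$
g(t)=\sum_{w}\max\bigl(0,\min(y_w,\,d(w)-t)\bigr).
$$
It is piecewise linear and non-increasing, with $-g'(t)=|\{w: d(w)-y_w<t<d(w)\}|$ at non-breakpoints; this is $|S_t|$ up to the finitely many breakpoints. Only vertices with $d(w)>t$ contribute, each at most $y_w$, so $g(t)\le y(V(G)\setminus U_t)$. Thus it is enough to find $t$ in the window with $|S_t|\le 2g(t)$. Suppose instead $|S_t|>2g(t)$ throughout $[y_s,y_s+\tfrac12)$. Then $(\log g)'<-2$ there, so $g$ drops by a factor $e^{-1}$ across a window of length $\tfrac12$. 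This must be played against a lower bound on $g$ at the right end, coming from the vertex $w$ above, which satisfies $d(w)-y_w>\tfrac12$.

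\textbf{Main obstacle.} The exponential decay alone does not contradict anything, since $g(y_s)$ may be large. To close the argument I expect to have to choose the window and the potential more carefully. One option is to add the weight already swallowed by the ball, in the spirit of Garg--Vazirani--Yannakakis, while keeping the constant exactly $2$ rather than a logarithm; the paper's phrase \emph{exact analogue} suggests this is possible. A second option is to exploit that the window has length $\tfrac12$ and each $w$ lies in $S_t$ only for $t$ in an interval of length $y_w<\tfrac12$. Integrating gives $\int|S_t|\,dt\le\sum_w y_w$ over the $w$ met, so the average of $|S_t|$ over the window is at most $2\sum y_w$. One would then compare this with $y(V\setminus U_t)$ at a suitably chosen $t$, for instance the last point of the window where the averaging inequality holds. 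Making this comparison rigorous is the step I expect to be hardest.

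\textbf{Final inequality.} Once $|S|\le 2y(V(G)\setminus U)$ is established, the strict inequality $2y(V(G)\setminus U)<|V(G)\setminus U|$ is immediate. The set $V(G)\setminus U$ is non-empty because $U$ is proper, and every vertex has $y_v<\tfrac12$.
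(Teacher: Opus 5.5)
\textbf{There is a genuine gap: the central inequality is never proved, and around an arbitrary root it can fail for every threshold.}

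Your set-up is sound. Your $U_t$ and $S_t$ are exactly the paper's $U(t)$ and $S(t)$: the paper gives each arc $vw$ length $y_v$, so its $d(a,v)$ is your $d(v)-y_v$. The closure property, the properness argument and the final strict inequality are all fine. But the inequality $|S_t|\le 2y(V(G)\setminus U_t)$ is not established, and no refinement of the potential or the window around a fixed root can rescue the argument.

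Here is a concrete instance. Take the arcs $sa,sb,ac,bc,cs$ with $y_s=0.45$ and $y_a=y_b=y_c=0.3$. This is a strong oriented graph whose only cycles, $sac$ and $sbc$, have weight $1.05$. Rooted at $s$, you get $d(s)=0.45$, $d(a)=d(b)=0.75$ and $d(c)=1.05$. The only non-empty proper balls are:
\begin{itemize}
\item $U_t=\{s\}$ with $S_t=\{a,b\}$, where $2y(V(G)\setminus U_t)=1.8<2$;
\item $U_t=\{s,a,b\}$ with $S_t=\{c\}$, where $2y(V(G)\setminus U_t)=0.6<1$.
\end{itemize}
Rooted at $c$, the choice $U=\{c\}$, $S=\{s\}$ does work. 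Your second option also points the wrong way. It bounds the boundary mass by the weight of the vertices already met, i.e.\ the weight inside the ball, whereas the lemma needs the weight outside it.

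\textbf{The missing idea is to choose the root by double counting.} Put $B_a=\{v:d(a,v)<1/2\}$ in the paper's normalisation. Then $v\in B_a\setminus\{a\}$ implies $a\notin B_v$; otherwise the two paths form a closed walk of weight less than $1$, which contains a cycle of weight less than $1$. Hence
$\sum_a y_a\sum_{v\in B_a\setminus\{a\}}y_v(1-2y_v)=\sum_v y_v(1-2y_v)\sum_{a\ne v,\,v\in B_a}y_a\le\sum_a y_a(1-2y_a)\,y(V(G)\setminus B_a)$.
Since every $y_a>0$, some root satisfies $\sum_{v\in B_a\setminus\{a\}}y_v(1-2y_v)\le(1-2y_a)\,y(V(G)\setminus B_a)$.

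For that root, average $|S(t)|-2y(S(t))$, not $|S(t)|$ alone, over $t\in(y_a,1/2)$. This window has length exactly $(1-2y_a)/2$, and it keeps $B(t)\subseteq B_a$; your window $[y_s,y_s+\tfrac12)$ overshoots it. Each $v$ contributes at most $y_v(1-2y_v)$ to the integral. So some $t$ satisfies $|S(t)|-2y(S(t))\le 2y(V(G)\setminus B_a)\le 2y(V(G)\setminus B(t))$, which rearranges to $|S(t)|\le 2y(V(G)\setminus U(t))$.

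Properness follows from $y(B_a)<y(V(G))$. This holds because the left side of the root inequality is positive: the root's out-neighbour lies in $B_a$. It is this symmetric averaging over roots, not a potential-decay estimate around a fixed root, that yields the exact constant $2$ with no logarithmic loss.
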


\begin{proof}
Give each arc $vw$ length $y_v$. For a root $a$, let $d(a,v)$ be the
shortest directed distance from $a$ to $v$, with $d(a,a)=0$; these
distances are finite because $G$ is strong. Every nontrivial closed
directed walk has length at least one, since it contains a cycle whose
weight is at least one.

For each root $a$, set $B_a=\{v:d(a,v)<1/2\}$. We claim that
$v\in B_a\setminus\{a\}$ implies $a\notin B_v$. Indeed, otherwise two
paths of length $<1/2$ would concatenate to a closed directed walk of
length $<1$, contradicting the fact that  $y$ is a fractional feedback vertex set.

Switching the order of summation and using the mutual exclusion,
\begin{align*}
\sum_a y_a\sum_{v\in B_a\setminus\{a\}}y_v(1-2y_v)
&=\sum_v y_v(1-2y_v)\sum_{\substack{a:\,v\in B_a\\a\ne v}}y_a\\
&\le\sum_v y_v(1-2y_v)\,y(V(G)\setminus B_v)\\
&=\sum_a y_a(1-2y_a)\,y(V(G)\setminus B_a).
\end{align*}
Since $y_a>0$ for every $a$, some root $a$ satisfies
\begin{equation}\label{eq:root-estimate}
\sum_{v\in B_a\setminus\{a\}}y_v(1-2y_v)
\le(1-2y_a)\,y(V(G)\setminus B_a).
\end{equation}

Fix this root $a$. Since $G$ is strong and has at least two vertices, the root $a$ has an
out-neighbour $w$. The arc $aw$ has length $y_a<1/2$, so
$d(a,w)\le y_a<1/2$, and hence $w\in B_a\setminus\{a\}$. Therefore the
left-hand side in (\ref{eq:root-estimate}) is positive, and the chosen inequality implies
$y(B_a)<y(V(G))$. %\jbjcomment{Please write this calculation.}

For $y_a<t<1/2$, define $B(t)=\{v:d(a,v)\le t\}$, $U(t)=\{v:d(a,v)+y_v\le t\}$ and $S(t)=\{v:d(a,v)\le t<d(a,v)+y_v\}$.  Clearly, $B(t)=U(t)\cup S(t)$. 
For the root $a$, $d(a,a)=0\le t$, so $a\in B(t)$; but
$t>y_a=d(a,a)+y_a$, so $a\notin S(t)$. Hence $a\in U(t)$.
 If $v\in U(t)$ and $vw$ is an arc, then
$d(a,w)\le d(a,v)+y_v\le t$, so every out-neighbour of $U(t)$ lies in
$B(t)$.

Set $g(t)=|S(t)|-2y(S(t))$. We compute the average of $g(t)$ over $t\in(y_a,1/2)$. As noted, the
root $a$ never belongs to $S(t)$. For any other vertex
$v\in B_a\setminus\{a\}$, we have $d(a,v)<1/2$, and $v\in S(t)$ exactly
when
$$
d(a,v)\le t<d(a,v)+y_v.
$$
This interval has length $y_v$. Throughout it, $v$ is counted once in
$|S(t)|$ and its weight $y_v$ is counted once in $y(S(t))$, so its
instantaneous contribution to $g(t)=|S(t)|-2y(S(t))$ is $1-2y_v$.
Over the whole interval, $v$ therefore contributes
$y_v(1-2y_v)$ to the total variation of $g(t)$. Vertices outside $B_a$
never belong to $S(t)$, since their distance from $a$ is at least $1/2$.
By (\ref{eq:root-estimate}), the average of $g(t)$ over $t\in(y_a,1/2)$ is
$$
\frac{1}{1/2-y_a}\sum_{v\in B_a\setminus\{a\}}y_v(1-2y_v)
\le 2\,y(V(G)\setminus B_a).
$$

Since $g$ is piecewise constant, choose $t$ with
$g(t)\le 2\,y(V(G)\setminus B_a)$. As $B(t)\subseteq B_a$, we have
$y(V(G)\setminus B(t))\ge y(V(G)\setminus B_a)$, and therefore
$$
|S(t)|-2y(S(t))=g(t)\le 2y(V(G)\setminus B(t)).
$$
Since $V(G)\setminus U(t)=S(t)\cup(V(G)\setminus B(t))$, this gives $
|S(t)|\le 2y(V(G)\setminus U(t))$. Take $U=U(t)$ and $S=S(t)$. Then $a\in U$, so $U$ is non-empty, and
$y(U)\le y(B_a)<y(V(G))$, so $U$ is proper. All outgoing arcs of $U$
end in $U\cup S$.
\end{proof}

\begin{theorem}\label{thm:fractional}
Every digraph $G$ satisfies $\nu^\ast(G)\ge\delta^+(G)/2$.
\end{theorem}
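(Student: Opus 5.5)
The plan is to prove the statement by induction on $|V(G)|$, working throughout with the dual formulation~\eqref{eq:duality}. It suffices to show that every fractional feedback vertex set $y$ of $G$ satisfies $y(V(G))\ge d/2$, where $d=\delta^+(G)$. We may assume $0\le y_v\le 1$ for every $v$. If $d=0$ there is nothing to prove, and a one-vertex digraph has $d=0$, which gives the base case. Suppose $d\ge1$ and that the claim holds for all digraphs of smaller order.

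\textbf{Step 1 (heavy vertices).} First I dispose of the case that some vertex $v$ has $y_v\ge 1/2$. The digraph $G-v$ has $\delta^+(G-v)\ge d-1$. The restriction of $y$ to $V(G)\setminus\{v\}$ is a fractional feedback vertex set of $G-v$, since every cycle of $G-v$ is a cycle of $G$. By induction and duality,
\[
y(V(G))\ge \tfrac12+\nu^\ast(G-v)\ge \tfrac12+\tfrac{d-1}{2}=\tfrac d2 .
\]
So from now on $y_v<1/2$ for every $v$.

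\textbf{Step 2 (reduction to Lemma~\ref{lem:boundary}).} Let $H$ be a terminal strong component of $G$. No arc leaves $H$, so $\delta^+(H)\ge d\ge1$, and hence $|V(H)|\ge2$. To obtain strict positivity, I replace $y$ by $y^\varepsilon=y+\varepsilon$ for a small $\varepsilon>0$. Then $y^\varepsilon$ is still a fractional feedback vertex set, and $0<y^\varepsilon_v<1/2$ for every $v$. In particular no $2$-cycle can exist, since its weight would be less than $1$. Thus $H$ is automatically a strong oriented graph, and Lemma~\ref{lem:boundary} applies to $(H,y^\varepsilon|_{V(H)})$. It yields disjoint sets $U,S$ with $U$ non-empty and proper in $V(H)$, all out-neighbours of $U$ lying in $U\cup S$, and $|S|\le 2y^\varepsilon(V(H)\setminus U)$.

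\textbf{Step 3 (induction on $U$).} In $H[U]$ each vertex loses at most $|S|$ out-neighbours, so $\delta^+(H[U])\ge d-|S|$. Since $U$ is a proper subset, induction together with the feasibility of $y^\varepsilon|_U$ gives $y^\varepsilon(U)\ge \max\{0,(d-|S|)/2\}$. Combining this with the bound on $|S|$,
\[
y^\varepsilon(V(G))\ge y^\varepsilon(U)+y^\varepsilon(V(H)\setminus U)\ge \tfrac{d-|S|}{2}+\tfrac{|S|}{2}=\tfrac d2 .
\]
Letting $\varepsilon\to0$ gives $y(V(G))\ge d/2$, which completes the induction.

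The main obstacle is meeting the hypotheses of Lemma~\ref{lem:boundary}: the strict bounds $0<y_v<1/2$ and the oriented-graph condition. Step~1 handles the upper bound, the $\varepsilon$-perturbation handles positivity, and the orientedness then comes for free from $y_v<1/2$. Once these are in place, the key inequality is simply that the boundary $S$ is paid for by the weight outside $U$.
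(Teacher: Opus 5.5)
Your proof is correct and follows the paper's route: delete a vertex with $y_v\ge 1/2$ and use induction, pass to a terminal strong component (automatically oriented once $y_v<1/2$), apply Lemma~\ref{lem:boundary}, and pay for the boundary $S$ with the weight outside $U$ while inducting on the subdigraph induced by $U$. The one real difference is how you get $y_v>0$: you perturb $y$ to $y+\varepsilon$ and let $\varepsilon\to0$, which is legitimate because you prove the bound for every feasible $y$. This is simpler than the paper's step of deleting a zero-weight vertex $v$ and adding shortcut arcs $uw$ for each pair $uv,vw$ in a minimal counterexample.
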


\begin{proof}
Let $\delta=\delta^+(G)$. Suppose that there exists a digraph $G$ such that $\nu^\ast(G)<\delta/2$ and choose a counterexample $G$ of minimum order. Then $\delta\ge 1$ and thus $G$ has a cycle. 
 By (\ref{eq:duality}), we may choose a
fractional feedback vertex set $y$ with 
$y(V(G))=\nu^\ast(G)<\delta/2$. Since $y$ gives weight at least one to every cycle, $y(V(G))\ge1$, and
hence $\delta\ge3$.

The digraph $G$ is strong. Indeed, a terminal strong component $H$ of $G$ has no out-going 
arcs to the rest of $G$ and hence has  minimum out-degree at least $\delta$. Thus the minimality of $G$ and the fact that $y(V(H))\leq y(V(G))$ implies that $G=H$.
%The restricted weights would have total at most $y(V(G))$, giving a
%smaller counterexample.

First suppose that $y_v\ge1/2$ for some vertex $v$.
Then $G-v$ has minimum out-degree at least $\delta-1$, and the
restricted weights have total $y(V(G))-y_v<(\delta-1)/2$. Thus $G-v$ is again a smaller counterexample, contradiction. Thus we must have $y_v<1/2$ for every
vertex $v\in V(G)$. In particular, $G$ has no 2-cycle,
since the weights on such a cycle would sum to less than one, contradicting that $y$ is a fractional vertex cover.

Next suppose that $y_v=0$ for some vertex $v$. We delete $v$ and, for
every pair of arcs $uv$ and $vw$ in $G$, add the arc $uw$; if several
pairs give the same arc, we keep only one copy. No loop is created in
this process, because $G$ has no 2-cycle. The resulting digraph still has minimum out-degree at least $\delta$. The
restricted weights remain a fractional feedback vertex set: replacing each
new arc $uw$ on a cycle by the two-arc path $u\to v\to w$ gives a closed
directed walk of the same weight in $G$, since $y_v=0$.
Thus the resulting smaller digraph is therefore a counterexample, a
contradiction.

We now have $0<y_v<1/2$ for every vertex, so Lemma~\ref{lem:boundary}
gives disjoint non-empty proper sets $U,S\subseteq V(G)$ such that every
out-neighbour of a vertex of $U$ lies in $U\cup S$ and
$|S|\le2y(V(G)\setminus U)$.
Since $U$ is a proper subset of $V(G)$, the minimality of $G$ gives
$\nu^\ast(G[U])\ge\delta^+(G[U])/2$. The restriction of $y$ to $U$ is a
feasible fractional feedback vertex set on $G[U]$, so
$\nu^\ast(G[U])\le y(U)$. Moreover, every out-neighbour of $U$ lies in
$U\cup S$, so deleting $S$ removes at most $|S|$ out-neighbours at any
vertex of $U$, giving $\delta^+(G[U])\ge\delta-|S|$. Combining these
estimates,
$$
\begin{aligned}
\delta
\le\delta^+(G[U])+|S|&\le2\nu^\ast(G[U])+|S|\\
&\le2y(U)+|S|\\
&\le2y(U)+2y(V(G)\setminus U)\\
&=2y(V(G))<\delta,
\end{aligned}
$$
a contradiction. This completes the proof.
\end{proof}

Let $H$ be a digraph. If all vertices of a cycle 
$C$ are removed from $H$, a remaining vertex $v$ loses exactly 
$|N_H^+(v)\cap V(C)|$ out-neighbours. We seek a probability distribution on the cycles of $H$ 
that bounds both the expectation and the largest possible value of this 
loss, for every vertex.
Fix an integer $n\ge|V(H)|\ge2$, and put $\ell=\log(2n)$.
%The same order bound $n$ will be used for all induced subdigraphs in a sequence of deletions.

Under the minimum out-degree conditions considered in this work, Lemmas~\ref{lem:distribution}-\ref{lem:sequence} allow us to extract an integer disjoint cycle packing whose size is at least $\nu^{\ast}(G)-o(k)$ from a fractional cycle packing of weight $\nu^{\ast}(G)$.

\begin{lemma}\label{lem:distribution}
Suppose that $0<\tau\le1/4$ and
$24\tau^{-2}\ell\le s\le\delta^+(H)$.
There is a probability distribution on the cycles of $H$ such that,
writing $C^\ast$ for a cycle drawn from this distribution, every
vertex $v\in V(H)$ satisfies
$$
\mathbb P[v\in V(C^\ast)]\le\frac{2}{(1-\tau)s}
\quad\text{and}\quad
\mathbb E\bigl[|N_H^+(v)\cap V(C^\ast)|\bigr]
\le\frac{2d_H^+(v)}{(1-\tau)s}.
$$
Moreover, with positive probability  every cycle $C^\ast$ satisfies
$$
|N_H^+(v)\cap V(C^\ast)|
\le\frac{24(1+\tau)\ell\,d_H^+(v)}{\tau^2s}
$$
for every vertex $v$.
\end{lemma}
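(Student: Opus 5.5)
The plan is to sparsify $H$ by random vertex sampling, so that out-neighbourhoods become small. We then apply Theorem~\ref{thm:fractional} to the sampled digraph and turn an optimal fractional cycle packing into a probability distribution on cycles.

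Set $p=24\tau^{-2}\ell/s\le 1$, which is allowed by the hypothesis $s\ge 24\tau^{-2}\ell$. Let $R\subseteq V(H)$ contain each vertex independently with probability $p$. For every $v$, $d^+(v,R)\sim\mathrm{Bin}(d_H^+(v),p)$ has mean $\mu_v=p\,d_H^+(v)\ge ps=24\tau^{-2}\ell$. Lemma~\ref{lem:chernoff} gives
$$
\mathbb P\bigl[|d^+(v,R)-\mu_v|>\tau\mu_v\bigr]\le 2e^{-8\ell}=2(2n)^{-8}.
$$
A union bound over the at most $n$ vertices shows that the event $\mathcal G$ has probability at least $1-n^{-7}>0$, where $\mathcal G$ is the event that $(1-\tau)p\,d_H^+(v)\le d^+(v,R)\le(1+\tau)p\,d_H^+(v)$ for all $v$. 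On $\mathcal G$ we have $\delta^+(H[R])\ge(1-\tau)ps$, so Theorem~\ref{thm:fractional} gives $\nu^\ast(H[R])\ge(1-\tau)ps/2$.

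Take an optimal fractional cycle packing $x$ of $H[R]$ and draw a cycle $C$ with probability $x_C/\nu^\ast(H[R])$. Since $\sum_{C\ni u}x_C\le 1$, every $u\in R$ satisfies
$$
\mathbb P[u\in V(C)\mid R]\le \frac{2}{(1-\tau)ps}.
$$
The final distribution $C^\ast$ is the mixture over the random choice of $R$. On $\mathcal G$ we use the packing of $H[R]$ just described. On the complement of $\mathcal G$ we use instead the normalized optimal fractional packing of $H$ itself, for which Theorem~\ref{thm:fractional} gives $\mathbb P[u\in V(C)]\le 2/\delta^+(H)\le 2/s$.

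Vertices outside $R$ are never hit on $\mathcal G$, so
$$
\mathbb P[u\in V(C^\ast)]\le p\cdot\frac{2}{(1-\tau)ps}+\mathbb P[\overline{\mathcal G}]\cdot\frac 2s .
$$
The first term is exactly $2/((1-\tau)s)$. The expectation bound then follows by linearity, summing the hitting probability over the $d_H^+(v)$ out-neighbours of $v$.

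The ``moreover'' part is where the mixture pays off. With probability $\mathbb P[\mathcal G]>0$ the cycle $C^\ast$ lies in $H[R]$ for some $R$ satisfying $\mathcal G$, and then
$$
|N_H^+(v)\cap V(C^\ast)|\le d^+(v,R)\le (1+\tau)p\,d_H^+(v)=\frac{24(1+\tau)\ell\,d_H^+(v)}{\tau^2 s}.
$$
This is exactly the claimed bound.

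The main obstacle is the additive error term $2\mathbb P[\overline{\mathcal G}]/s\le 2n^{-7}/s$, which spoils the clean constant $2/((1-\tau)s)$. I would first try to absorb it into the available slack. The Chernoff estimate has room to spare ($e^{-8\ell}$ is far smaller than needed), so one can define $\mathcal G$ with deviation $\tau'$ slightly smaller than $\tau$. The resulting gain $(1-\tau')^{-1}<(1-\tau)^{-1}$ exceeds the tiny failure term, because $\tau'$ can be chosen with $(1-\tau')^{-1}-(1-\tau)^{-1}\gg n^{-7}$ while $e^{-\tau'^2\mu/3}$ remains polynomially small in $n$.

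If that bookkeeping is awkward, the alternative is to condition on $\mathcal G$. Then $\mathbb P[u\in R\mid\mathcal G]\le p/\mathbb P[\mathcal G]$, and the factor $1/\mathbb P[\mathcal G]\le 1+2n^{-7}$ is again absorbed by shrinking the deviation from $\tau$ to $\tau'$. This version has the added benefit that the whole support satisfies the ``moreover'' bound.
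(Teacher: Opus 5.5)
Your argument is correct in substance, but it takes a different route from the paper.

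\textbf{The paper's route.} The paper colours $V(H)$ at random with $t=\lfloor\tau^2s/(12\ell)\rfloor$ colours. It fixes one colouring in which every vertex has $(1\pm\tau)d_H^+(v)/t$ out-neighbours in every class. It then chooses a class $V_I$ uniformly at random, and a cycle of $H[V_I]$ proportionally to an optimal fractional packing. The classes partition $V(H)$ and all of them are good simultaneously, so each vertex lies in the chosen class with probability exactly $1/t$. There is no failure event, and $2/((1-\tau)s)$ comes out with no error term. Every cycle in the support automatically satisfies the bound in the ``moreover'' part. The cost is the integrality of $t$: the floor loses a factor $2$, which is where the $24$ (rather than $12$) in the hypothesis comes from.

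\textbf{Your route.} Your single sample $R$ at rate $p=24\tau^{-2}\ell/s$ needs no integer $t$. But since $R$ is part of the randomness, you must pay for $\overline{\mathcal G}$. You spend the same spare factor on it: your Chernoff exponent is $8\ell$, where $4\ell$ would suffice.

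\textbf{Two points to tighten.}

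First, adopt your conditioned version, not the mixture with a fallback to the packing of $H$. Lemma~\ref{lem:deletion} uses $|N_H^+(v)\cap V(C^\ast)|\le\rho\,d_H^+(v)$ pointwise, via Lemma~\ref{lem:mvt}, which needs $x<1/(q+1)$, before taking expectations. So this bound must hold for every cycle in the support. The fallback cycles can violate it, even though they meet the literal wording ``with positive probability''.

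Second, the absorption step needs a lower bound on $\tau$, which you assert but do not justify. Write $\mathcal G'$ for $\mathcal G$ with $\tau$ replaced by $\tau'=\tau/\sqrt2$.
\begin{itemize}
\item The Chernoff exponent is then still $4\ell$, so $\mathbb P[\overline{\mathcal G'}]\le 2n(2n)^{-4}=1/(8n^3)$.
\item Conditioning on $\mathcal G'$ gives hitting probability at most $2/((1-\tau')\mathbb P[\mathcal G']s)$.
\item This is at most $2/((1-\tau)s)$ once $(1-1/\sqrt2)\tau\ge1/(8n^3)$.
\item That holds because $24\tau^{-2}\ell\le s\le\delta^+(H)\le|V(H)|-1<n$ gives $\tau>\sqrt{24\ell/n}$.
\end{itemize}
Also fix a sign slip: the gain you want is $(1-\tau)^{-1}-(1-\tau')^{-1}$. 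It has to beat the failure probability at deviation $\tau'$, not $n^{-7}$.

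With these details, the conditioned distribution satisfies all three bounds. The ``moreover'' bound then holds on the whole support, since $d^+(v,R)\le(1+\tau')p\,d_H^+(v)\le(1+\tau)p\,d_H^+(v)$ for every $R\in\mathcal G'$.
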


\begin{proof}
Put $t=\lfloor\tau^2s/(12\ell)\rfloor\ge2$, and colour the vertices of
$H$ independently and uniformly with $t$ colours. For a fixed vertex
$v$ and colour $i$, let $X$ be the number of out-neighbours of $v$ of
colour $i$. Then $X$ is binomialy distributed with expected value
$\mu=d_H^+(v)/t\ge12\ell/\tau^2$, so the Chernoff bound  gives
$$
\mathbb P\bigl[|X-\mu|>\tau\mu\bigr]\le2e^{-\tau^2\mu/3}\le2e^{-4\ell}.
$$
There are at most $nt\le n^2$ pairs $(v,i)$, since
$t\le s\le|V(H)|-1<n$. Hence, by the union bound the probability that some bound fails is at
most $2n^2e^{-4\ell}=1/(8n^2)<1$. We may therefore fix a colouring in
which every vertex has between $(1-\tau)d_H^+(v)/t$ and
$(1+\tau)d_H^+(v)/t$ out-neighbours in each colour class
$V_1,\ldots,V_t$. In particular, every colour class is non-empty and
$\delta^+(H[V_i])\ge(1-\tau)s/t$ for all $i\in [t]$.

By Theorem~\ref{thm:fractional}, each $H[V_i]$ has a fractional cycle
packing $\{x_C\}$ with total weight
$$
\nu_i^\ast=\sum_{C\subseteq V_i}x_C\ge\frac{(1-\tau)s}{2t}.
$$

Choose $I$ uniformly at random from $\{1,\ldots,t\}$. Conditional on
$I=i$, select a cycle $C$ in $H[V_i]$ with probability $x_C/\nu_i^\ast$.
Let $C^\ast$ denote the cycle thus selected. For a cycle
$C\subseteq V_i$, the probability that $C^\ast=C$ is $x_C/(t\nu_i^\ast)$. Consider an arbitrary vertex $u$ and let $j\in [t]$ be the index so that $u\in V_j$. If $I\ne j$, then $u\notin V(C^\ast)$; if $I=j$, then
$u\in V(C^\ast)$ exactly when the selected cycle in $H[V_j]$ contains
$u$. Hence
$$
\mathbb P[u\in V(C^\ast)]
=\frac{1}{t\nu_i^\ast}\sum_{C\ni u}x_C
\le\frac{1}{t\nu_i^\ast}
\le\frac{2}{(1-\tau)s},
$$
where the inequality uses the capacity constraint
$\sum_{C\ni u}x_C\le1$ and $\nu_i^\ast\ge(1-\tau)s/(2t)$. Summing over
$u\in N_H^+(v)$,
$$
\mathbb E\bigl[|N_H^+(v)\cap V(C^\ast)|\bigr]
=\sum_{u\in N_H^+(v)}\mathbb P[u\in V(C^\ast)]
\le\frac{2d_H^+(v)}{(1-\tau)s}.
$$

Finally, $C^\ast$ lies in a single colour class $V_i$, and therefore
$$
|N_H^+(v)\cap V(C^\ast)|
\le |N_H^+(v)\cap V_i|
\le\frac{(1+\tau)d_H^+(v)}{t}
\le\frac{24(1+\tau)\ell\,d_H^+(v)}{\tau^2s},
$$
where the last inequality uses $t=\lfloor\tau^2s/(12\ell)\rfloor\ge\tau^2s/(24\ell)$.
\end{proof}

The next lemma shows that, after deleting one cycle, the $q$-th power of
the reciprocal out-degree of every vertex does not increase in
expectation. This will be applied at each step of the deletion sequence
in Lemma~\ref{lem:sequence}.

\begin{lemma}\label{lem:deletion}
Let $q\ge1$ be an integer, let $0<a<b<s$, and suppose that
$\delta^+(H)\ge s$. Let $C^\ast$ be a random cycle such that, for every
vertex $v$,
$$
\mathbb E\bigl[|N_H^+(v)\cap V(C^\ast)|\bigr]\le\frac{a\,d_H^+(v)}{s} \mbox{ and } 
|N_H^+(v)\cap V(C^\ast)|\le\rho\,d_H^+(v).
$$  If $(q+1)\rho\le1-a/b$, then $H-V(C^\ast)$ is non-empty and
every vertex outside $C^\ast$ has positive out-degree in
$H-V(C^\ast)$. Moreover, for every vertex $v$,
$$
\mathbb E\left[
\left(\frac{s-b}{d_{H-V(C^\ast)}^+(v)}\right)^q
\right]
\le\left(\frac{s}{d_H^+(v)}\right)^q.
$$
\end{lemma}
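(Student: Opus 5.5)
Write $d=d_H^+(v)$ and $X=|N_H^+(v)\cap V(C^\ast)|$. For $v\notin V(C^\ast)$ we have $d^+_{H-V(C^\ast)}(v)=d-X$; for $v\in V(C^\ast)$ we use the convention that the quantity is not counted, or equivalently we work with the bound on $X$ only. The plan is to bound $(d/(d-X))^q$ pointwise by a linear function of $X$, then take expectations.

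\textbf{Non-emptiness and positive out-degree.} Since $a<b$, the hypothesis $(q+1)\rho\le 1-a/b<1$ gives $\rho<1/(q+1)\le 1/2$. Hence $X\le\rho d<d$, so every vertex outside $C^\ast$ keeps at least $(1-\rho)d>0$ out-neighbours. For non-emptiness, pick any vertex $u$ of $C^\ast$ and let $w$ be its predecessor on $C^\ast$. Applying the bound to $w$ shows that at most $\rho d^+(w)<d^+(w)$ out-neighbours of $w$ lie on $C^\ast$. So $w$ has an out-neighbour off $C^\ast$, and $H-V(C^\ast)$ is non-empty.

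\textbf{Pointwise linearization.} Set $x=X/d\in[0,\rho]$. Then $0\le x<1/(q+1)$, so Lemma~\ref{lem:mvt} gives
$$(1-x)^{-q}\le 1+\frac{qx}{1-(q+1)x}\le 1+\frac{qx}{1-(q+1)\rho}\le 1+\frac{qbx}{a}.$$
The last step uses $1-(q+1)\rho\ge a/b$. Taking expectations and using $\mathbb E[x]\le a/s$, we get
$$\mathbb E\bigl[(d/(d-X))^q\bigr]\le 1+\frac{qb}{s}.$$

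\textbf{Absorbing the factor.} We need
$$\Bigl(\tfrac{s-b}{d}\Bigr)^q\Bigl(1+\tfrac{qb}{s}\Bigr)\le \Bigl(\tfrac{s}{d}\Bigr)^q,$$
which is equivalent to $(1-b/s)^q(1+qb/s)\le 1$. Put $u=b/s\in(0,1)$. Then $(1-u)^q(1+qu)\le(1-u)^q(1+u)^q=(1-u^2)^q\le 1$, using Bernoulli's inequality in the form $(1+u)^q\ge 1+qu$. Multiplying out gives the claimed expectation bound.

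The main obstacle is the linearization step: one must check that the constraint $(q+1)\rho\le 1-a/b$ is exactly what turns the Lemma~\ref{lem:mvt} denominator into the factor $b/a$, so that the expectation bound $a/s$ becomes $b/s$. After that, the final inequality is immediate.
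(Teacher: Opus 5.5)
Your proposal is correct and follows the paper's proof essentially step for step: the same substitution $x=X/d$ into Lemma~\ref{lem:mvt} and the same use of $1-(q+1)\rho\ge a/b$ to reach $\mathbb E[(d/(d-X))^q]\le 1+qb/s$. Two small differences: you absorb the factor via $(1-u)^q(1+qu)\le(1-u^2)^q\le1$ where the paper uses $1+qb/s\le e^{qb/s}$ and $(1-b/s)^q\le e^{-qb/s}$, and your non-emptiness argument (a vertex of $C^\ast$ has fewer than $d^+$ out-neighbours on $C^\ast$) is more explicit than the paper's, which only asserts it.
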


\begin{proof}
The hypothesis gives
$$
\frac{|N_H^+(v)\cap V(C^\ast)|}{d_H^+(v)}\le\rho<\frac1{q+1}<1,
$$
so $d_{H-V(C^\ast)}^+(v)>0$ for every $v\notin V(C^\ast)$, and
$H-V(C^\ast)$ is nonempty.

For $0\le x<1/(q+1)$, Lemma~\ref{lem:mvt} gives
$(1-x)^{-q}\le1+qx/(1-(q+1)x)$. Substitute $x=|N_H^+(v)\cap V(C^\ast)|/d_H^+(v)\le\rho\leq 1/(q+1)$. Using
$1-(q+1)\rho\ge a/b$,
$$
\left(\frac{d_H^+(v)}{d_H^+(v)-|N_H^+(v)\cap V(C^\ast)|}\right)^q
\le1+\frac{qb}{a\,d_H^+(v)}|N_H^+(v)\cap V(C^\ast)|.
$$
Taking expectations over the choice of $C^\ast$ and using
$\mathbb E[|N_H^+(v)\cap V(C^\ast)|]\le a\,d_H^+(v)/s$,
$$
\mathbb E\left[
\left(\frac{d_H^+(v)}{d_H^+(v)-|N_H^+(v)\cap V(C^\ast)|}\right)^q
\right]
\le1+\frac{qb}{s}\le e^{qb/s}.
$$

Since $(1-b/s)^q\le e^{-qb/s}$, multiplying both sides by
$(1-b/s)^q(s/d_H^+(v))^q$ and using that $d_{H-V(C^\ast)}^+(v)=d_H^+(v)-|N_H^+(v)\cap V(C^\ast)|$ yields  
$$
\mathbb E\left[
\left(\frac{s-b}{d_{H-V(C^\ast)}^+(v)}\right)^q
\right]
\le\left(\frac{s}{d_H^+(v)}\right)^q,
$$
which completes the proof.
\end{proof}

We now prescribe a decreasing sequence $\{s_i\}$ of out-degree bounds.
The decrease at each step approaches two when the current bound
is large. A probability estimate for each vertex shows that there
is a sequence of cycle deletions satisfying all the bounds.

\begin{lemma}\label{lem:sequence}
Let $n\ge2$ be an integer, and let D be a digraph with
$|V(D)|\le n$ and minimum out-degree $r\ge1$.
Put $\ell=\log(2n)$.
Fix an integer $q>\ell$, and set
$s_0=r(1-\ell/q)$, $\kappa=30(q+1)\ell$, and $s_\ast=64\kappa$.
If $s_0\ge s_\ast$, then D contains a family of $m$
disjoint cycles with
$m\ge s_0/2-(3/2)\kappa^{1/3}s_0^{2/3}-8\kappa$.
Their deletion leaves a non-empty induced subdigraph with minimum
out-degree greater than $s_\ast-4$.
\end{lemma}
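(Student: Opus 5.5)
We plan to run an iterative deletion procedure. Put $H_0=D$, delete one cycle per step, and keep the potential
$$\Phi_i=\sum_{v\in V(H_i)}\Bigl(\frac{s_i}{d^+_{H_i}(v)}\Bigr)^q$$
under control, where $s_{i+1}=s_i-b_i$ is a prescribed decreasing sequence. At the start $d^+_{D}(v)\ge r$ and $s_0=r(1-\ell/q)$, so $(s_0/d^+(v))^q\le(1-\ell/q)^q\le e^{-\ell}=1/(2n)$. Hence $\Phi_0\le n/(2n)=1/2$.

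The key invariant is $\Phi_i\le 1/2<1$. It forces every term to be below $1$, that is, $d^+_{H_i}(v)>s_i$ for every surviving vertex, so $\delta^+(H_i)\ge s_i$. This is exactly what Lemma~\ref{lem:distribution} and Lemma~\ref{lem:deletion} need with $s=s_i$.

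At step $i$ we apply Lemma~\ref{lem:distribution} to $H_i$ with $s=s_i$ and a parameter $\tau_i$. This gives $a=2/(1-\tau_i)$ and $\rho=24(1+\tau_i)\ell/(\tau_i^2 s_i)$. We choose $b_i$ with $(q+1)\rho\le 1-a/b_i$, that is,
$$b_i=\frac{a}{1-(q+1)\rho}\approx 2(1+\tau_i)\bigl(1+O(q\ell/(\tau_i^2 s_i))\bigr).$$
So $b_i\approx 2+2\tau_i+c\,\kappa/(\tau_i^2 s_i)$ for a constant $c$ absorbed into $\kappa=30(q+1)\ell$. Balancing the two error terms gives $\tau_i\asymp(\kappa/s_i)^{1/3}$, hence $b_i=2+O((\kappa/s_i)^{1/3})$.

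The distribution from Lemma~\ref{lem:distribution} puts positive probability on cycles that obey the worst-case bound. We condition on that event, or better, restrict the distribution to those cycles from the start, since the coloring is fixed and then every cycle drawn satisfies the bound. Lemma~\ref{lem:deletion} then gives $\mathbb E[\Phi_{i+1}]\le\Phi_i$. Vertices on $C^\ast$ disappear and only help, since the terms are nonnegative. So some cycle $C_i$ achieves $\Phi_{i+1}\le\Phi_i\le1/2$.

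We continue while $s_i\ge s_\ast=64\kappa$. The side conditions are $s\ge24\tau^{-2}\ell$ and $\tau\le1/4$. With $\tau_i=(\kappa/s_i)^{1/3}$, the bound $s_i\ge 64\kappa$ gives $\tau_i\le1/4$, and $\tau_i^2 s_i=\kappa^{2/3}s_i^{1/3}\ge 4\kappa\ge 24\ell$. We also need $b_i<s_i$ and $a<b_i$, both of which are trivial.

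Counting steps is the main computational obstacle. The number $m$ of steps is the least index with $s_m<s_\ast$, where
$$s_{i+1}=s_i-2-O\bigl(\kappa^{1/3}s_i^{-1/3}\bigr).$$
We compare with the ODE $ds/di=-(2+c\kappa^{1/3}s^{-1/3})$, that is,
$$di=-\frac{ds}{2+c\kappa^{1/3}s^{-1/3}}\ge -\frac{ds}{2}\bigl(1-\tfrac c2\kappa^{1/3}s^{-1/3}\bigr).$$
Integrating from $s_\ast$ to $s_0$ gives
$$m\ge\frac{s_0-s_\ast}{2}-O\bigl(\kappa^{1/3}s_0^{2/3}\bigr),$$
since $\int s^{-1/3}\,ds=\tfrac32 s^{2/3}$; this is where the constant $3/2$ should come from. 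One more step is lost to discreteness. With $s_\ast/2=32\kappa$ and careful constants, we expect to recover the stated bound $m\ge s_0/2-\tfrac32\kappa^{1/3}s_0^{2/3}-8\kappa$. The bookkeeping of constants here (the choice of $\tau_i$, and keeping $b_i\le 2+3\tau_i$, say) is what we expect to be delicate.

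At termination we have $s_{m-1}\ge s_\ast$ and $s_m=s_{m-1}-b_{m-1}$ with $b_{m-1}<4$. The invariant then gives minimum out-degree greater than $s_m>s_\ast-4$ in the nonempty remainder, which is nonempty by Lemma~\ref{lem:deletion}. The deleted cycles $C_0,\dots,C_{m-1}$ are disjoint, because each is chosen inside the remaining induced subdigraph.
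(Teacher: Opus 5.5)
Your architecture is right and differs from the paper's only cosmetically. You keep the summed potential $\Phi_i$ and choose each cycle by conditional expectation. The paper instead deletes random cycles, tracks $Z_j(v)$ for each vertex (frozen after a failure), and finishes with Markov plus a union bound. Both rest on $(1-\ell/q)^q\le 1/(2n)$ and Lemma~\ref{lem:deletion}, and your side-condition checks are fine. One caution: do not literally condition on the good event, which could destroy the expectation bound. The point is that, once the colouring is fixed, every cycle in the support already satisfies the worst-case bound.

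The genuine gap is the step count. It is the quantitative content of the lemma, and you leave it at ``we expect to recover''. As sketched it would not give the stated constant. Writing the error as $O(\kappa^{1/3}s_0^{2/3})$ discards the lower-limit term of the integral. Combined with $(s_0-s_\ast)/2$ and a step lost to discreteness, you would land near $s_0/2-\tfrac32\kappa^{1/3}s_0^{2/3}-32\kappa-1$, not $-8\kappa$. Your suggested normalisation $b_i\le 2+3\tau_i$ is also unattainable. With $\tau_i=(\kappa/s_i)^{1/3}$ one has exactly $(q+1)\rho=\tfrac45\tau_i(1+\tau_i)$. So your choice $b_i=a/(1-(q+1)\rho)=2/(1-\tfrac95\tau_i+\tfrac45\tau_i^3)$ is about $2+3.6\tau_i$.

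What is actually needed is $1/b_i\ge\tfrac12-\tau_i$, that is, $b_i\le 2/(1-2\tau_i)$. The paper simply takes $b_i=2/(1-2\tau_i)$, for which $1-a/b_i=\tau_i/(1-\tau_i)>\tau_i\ge(q+1)\rho$; this uses $24(1+\tau_i)\le 30$. Your $b_i$ also qualifies, since $1-\tfrac95\tau+\tfrac45\tau^3\ge 1-2\tau$.

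Since $u\mapsto\tfrac12-(\kappa/u)^{1/3}$ is increasing, you get $(s_j-s_{j+1})/b_j\ge\int_{s_{j+1}}^{s_j}\bigl(\tfrac12-(\kappa/u)^{1/3}\bigr)\,du$. Summing over $j<m$, the intervals cover $[s_m,s_0]\supseteq[s_\ast,s_0]$, and the integrand is nonnegative there because $s_m\ge 64\kappa-4\ge 8\kappa$. So nothing is lost to discreteness, and
\[
m\ge\int_{s_\ast}^{s_0}\Bigl(\tfrac12-\bigl(\tfrac{\kappa}{u}\bigr)^{1/3}\Bigr)\,du=\frac{s_0}{2}-\frac32\kappa^{1/3}s_0^{2/3}-32\kappa+24\kappa .
\]
Here $24\kappa=\tfrac32\kappa^{1/3}(64\kappa)^{2/3}$ is exactly the lower-limit term you dropped. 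With this in place, the rest of your argument goes through, including $b_{m-1}\le 4$ for the final out-degree bound.
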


\begin{proof}
Set $\tau_j=(\kappa/s_j)^{1/3}$ and $b_j=2/(1-2\tau_j)$.
Starting from $s_0$, set $s_{j+1}=s_j-b_j$ until the first $m$ with
$s_m<s_\ast$. For every $j<m$, $s_j\ge s_\ast=64\kappa$, and hence
$\tau_j\le1/4$ and $2\le b_j\le4$. Thus $m$ is finite. The final digraph
has minimum out-degree greater than $s_m\ge s_\ast-4$.

At step $j$, the current digraph $H_j$ has minimum out-degree greater than
$s_j$. Since $s_j\ge64\kappa$, we have $s_j^{1/3}\ge4\kappa^{1/3}$ and
thus
$$
24\tau_j^{-2}\ell
=\frac{24\ell\,s_j}{s_j^{1/3}\kappa^{2/3}}
\le\frac{6\ell}{\kappa}s_j\le s_j,
$$
so Lemma~\ref{lem:distribution} with parameter $\tau_j$ gives a probability distribution on the cycles of $H_j$. It satisfies
$$
\mathbb E[|N_{H_j}^+(v)\cap V(C^\ast)|]
\le\frac{2d_{H_j}^+(v)}{(1-\tau_j)s_j},
\qquad
|N_{H_j}^+(v)\cap V(C^\ast)|
\le\frac{24(1+\tau_j)\ell\,d_{H_j}^+(v)}{\tau_j^2s_j}.
$$
These are the hypotheses of Lemma~\ref{lem:deletion} with
$a=2/(1-\tau_j)$, $b=b_j$, and
$\rho=24(1+\tau_j)\ell/(\tau_j^2s_j)$. Note that
$0<a<b_j\le4<s_j$. Moreover,
$$
(q+1)\rho =\frac{24(1+\tau_j)(q+1)\ell}{\tau_j^2s_j}\le\frac{30(q+1)\ell}{\tau_j^2s_j}=\tau_j
<1-\frac{a}{b_j}.
$$
Thus Lemma~\ref{lem:deletion} applies at every step.
%\jbjcomment{perhaps the last in-equality should be justified by a calculation (I checked that it holds)}\\

We now choose $m$ successive cycles at random. Start with $H_0=D$. At
step $j<m$, choose a cycle $C_{j+1}$ from the distribution above and set
$H_{j+1}=H_j-V(C_{j+1})$. If any remaining vertex has out-degree at most
$s_{j+1}$ in $H_{j+1}$, stop; otherwise continue.
For each vertex $v$, define the random variable $Z_j(v)$ is follows 
$$
Z_j(v)=
\begin{cases}
0,&v\text{ has been removed},\\
(s_j/d_{H_j}^+(v))^q,&\text{otherwise}.
\end{cases}
$$
Lemma~\ref{lem:deletion} gives $\mathbb E[Z_{j+1}(v)]\le Z_j(v)$ at
every step. Iterating,
$$
\mathbb E[Z_m(v)]\le Z_0(v)=\left(\frac{s_0}{r}\right)^q
=\left(1-\frac{\ell}{q}\right)^q<1.
$$

If the process stops early, some remaining vertex $v$ has out-degree at
most the current threshold, so $Z_m(v)\ge1$.  This holds because $d_{H_m}^+(v)\le s_m$, hence $s_m/d_{H_m}^+(v)\ge1$.  Since $Z_m(v)\ge0$ and
$\mathbb E[Z_m(v)]\le(1-\ell/q)^q$, Markov's inequality gives
$$
\mathbb P[Z_m(v)\ge1]\le\mathbb E[Z_m(v)]\le\left(1-\frac{\ell}{q}\right)^q.
$$
Taking a union bound over all vertices, the probability of stopping early
is at most
$$
n\left(1-\frac{\ell}{q}\right)^q\le ne^{-\ell}=\frac12.
$$
Therefore, with positive probability all $m$ deletions succeed. Choosing
such a sequence of cycles gives $m$ disjoint cycles, and the remaining
digraph has minimum out-degree greater than $s_m\ge s_\ast-4$.

It remains to estimate $m$. Since $s_j-s_{j+1}=b_j$,
$$
m=\sum_{j=0}^{m-1}1=\sum_{j=0}^{m-1}\frac{s_j-s_{j+1}}{b_j}.
$$
Note that $1/b_j=1/2-(\kappa/s_j)^{1/3}$, and this function of $s_j$ is
increasing. %On the interval 
For all $u\in [s_{j+1},s_j]$ we have 
$$
\frac12-\left(\frac{\kappa}{u}\right)^{1/3}\le\frac1{b_j}.
$$
Therefore
$$
\frac{s_j-s_{j+1}}{b_j}
\ge\int_{s_{j+1}}^{s_j}\left(\frac12-\left(\frac{\kappa}{u}\right)^{1/3}\right)du.
$$
Summing over $j$, the intervals cover $[s_\ast,s_0]$, and
$$
m\ge\int_{s_\ast}^{s_0}\left(\frac12-\left(\frac{\kappa}{u}\right)^{1/3}\right)du
=\frac{s_0}{2}-\frac32\kappa^{1/3}s_0^{2/3}-8\kappa,
$$
using $s_\ast=64\kappa$.
\end{proof}

\begin{theorem}\label{thm:local}
Every digraph $D$ of order $n$ and minimum out-degree $r\ge1$
satisfies
$\nu(D)\ge\max\{0,r/2-7r^{3/4}\sqrt{\log(2n)}\}$.
\end{theorem}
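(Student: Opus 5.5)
The plan is to apply Lemma~\ref{lem:sequence} with a choice of $q$ that balances the two error terms. One error comes from $s_0=r(1-\ell/q)$, namely the loss $r\ell/(2q)$. The other is $(3/2)\kappa^{1/3}s_0^{2/3}$ with $\kappa=30(q+1)\ell$, which is of order $(q\ell)^{1/3}r^{2/3}$.

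To balance, set $r\ell/q \approx (q\ell)^{1/3}r^{2/3}$, which gives $q^{4/3}\approx r^{1/3}\ell^{2/3}$, i.e.\ $q\approx r^{1/4}\ell^{1/2}$. With this choice both errors are of order $r^{3/4}\ell^{1/2}$, matching the claimed $7r^{3/4}\sqrt{\log(2n)}$. Concretely, I will take $q=\lceil r^{1/4}\sqrt{\ell}\,\rceil$, where $\ell=\log(2n)$.

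First, the trivial cases. If $r/2-7r^{3/4}\sqrt{\ell}\le 0$ there is nothing to prove, since $\nu(D)\ge0$. If $n=1$, a loopless digraph cannot have $r\ge1$, so we may assume $n\ge2$. So assume $r^{1/4}>14\sqrt{\ell}$, i.e.\ $r>14^4\ell^2$. Then $q\ge r^{1/4}\sqrt\ell>14\ell>\ell$, so the hypothesis $q>\ell$ of Lemma~\ref{lem:sequence} holds. We also have $\ell/q\le \sqrt\ell\, r^{-1/4}<1/14$, hence $s_0\ge (13/14)r$.

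Next, check $s_0\ge s_\ast=64\kappa=1920(q+1)\ell$. Since $q+1\le r^{1/4}\sqrt\ell+2\le 2r^{1/4}\sqrt{\ell}$, we get $s_\ast\le 3840\, r^{1/4}\ell^{3/2}$. Requiring $(13/14)r\ge 3840\,r^{1/4}\ell^{3/2}$ amounts to $r^{3/4}\gtrsim 4136\,\ell^{3/2}$, i.e.\ $r\gtrsim 65\,\ell^2$, which follows from $r>14^4\ell^2$. So the lemma applies and gives
\[
\nu(D)\ge m\ge \frac{s_0}{2}-\frac32\kappa^{1/3}s_0^{2/3}-8\kappa .
\]

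Finally, bound each error term by a multiple of $r^{3/4}\sqrt\ell$.
\begin{itemize}
\item[(a)] The loss $r/2-s_0/2=r\ell/(2q)\le \tfrac12 r^{3/4}\sqrt\ell$.
\item[(b)] We have $\kappa\le 30\cdot 2r^{1/4}\ell^{3/2}=60r^{1/4}\ell^{3/2}$, and $s_0\le r$. Hence $\kappa^{1/3}s_0^{2/3}\le 60^{1/3}r^{1/12+2/3}\ell^{1/2}=60^{1/3}r^{3/4}\sqrt\ell$. Since $60^{1/3}\approx3.92$, this term contributes at most about $5.9\,r^{3/4}\sqrt\ell$.
\item[(c)] The term $8\kappa\le 480\,r^{1/4}\ell^{3/2}$. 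Using $r^{1/2}>196\,\ell$, this is at most $(480/196)\,r^{3/4}\sqrt\ell\approx 2.45\,r^{3/4}\sqrt\ell$.
\end{itemize}
The total is about $0.5+5.9+2.45\approx 8.85$ times $r^{3/4}\sqrt{\ell}$, slightly above the target $7$.

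This constant bookkeeping is the main obstacle. Terms (b) and (c) must be tightened. I would first use sharper estimates for $q+1$: since $r^{1/4}\sqrt\ell>14\ell\ge 14\log 4$, we have $q+1\le r^{1/4}\sqrt\ell(1+2/(14\ell))\approx1.1\,r^{1/4}\sqrt\ell$. This gives $\kappa\le 33\,r^{1/4}\ell^{3/2}$, so (b) is about $\tfrac32\cdot 33^{1/3}\approx4.8$ and (c) is about $264/196\approx1.35$. With (a), the sum is about $0.5+4.8+1.35\approx6.65<7$. If this turns out to be still too tight, I would re-tune $q=\lceil c\,r^{1/4}\sqrt\ell\,\rceil$ with a constant $c$ chosen to minimize $1/(2c)+\tfrac32(30c)^{1/3}$, and adjust the threshold for the nontrivial case accordingly.
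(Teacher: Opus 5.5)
Your proposal follows the paper's proof. Both use $q=\lceil r^{1/4}\sqrt{\ell}\,\rceil$, reduce to the case $r^{1/4}>14\sqrt{\ell}$, and apply Lemma~\ref{lem:sequence}. Your refined bookkeeping ($\tfrac12+4.8+1.35<7$) corresponds to the paper's estimate $\kappa<35\,r^{1/4}\ell^{3/2}$, which yields $\tfrac12+5+\tfrac{10}{7}<7$.

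One step fails as written: the verification of $s_0\ge s_\ast$.

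\begin{itemize}
\item With the crude bound $q+1\le 2r^{1/4}\sqrt{\ell}$ you need $r^{3/4}\ge 4136\,\ell^{3/2}$.
\item That condition is $r\ge 4136^{4/3}\ell^2\approx 6.6\times 10^4\,\ell^2$, not $65\,\ell^2$.
\item It does not follow from $r>14^4\ell^2\approx 3.84\times 10^4\,\ell^2$. So near the threshold the crude estimate genuinely is not enough.
\end{itemize}

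The repair is to use here the sharper bound you only introduce later. Since $r^{1/4}\sqrt{\ell}>14\ell$ and $\ell\ge\log 4$,
$$q+1<r^{1/4}\sqrt{\ell}+2<\bigl(1+\tfrac{1}{7\ell}\bigr)r^{1/4}\sqrt{\ell}<1.11\,r^{1/4}\sqrt{\ell},$$
which gives $64\kappa<2132\,r^{1/4}\ell^{3/2}$. On the other side, $r^{3/4}>14^3\ell^{3/2}$ gives
$$s_0\ge\tfrac{13}{14}r>\tfrac{13}{14}\cdot 14^3\,\ell^{3/2}r^{1/4}=2548\,r^{1/4}\ell^{3/2},$$
so $s_0>s_\ast$.

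A small rounding point: the same estimate gives $\kappa<33.1\,r^{1/4}\ell^{3/2}$, not $\le 33\,r^{1/4}\ell^{3/2}$. The total is still about $0.5+4.82+1.35\approx 6.67<7$.

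With these corrections the argument is complete, and re-tuning $q$ with a constant $c$ is unnecessary.
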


\begin{proof}
Put $\ell=\log(2n)$ and choose $q=\lceil r^{1/4}\sqrt\ell\rceil$.
If $r/2<7r^{3/4}\sqrt\ell$, then $\nu(D)\ge0$ already gives the result.
We may therefore assume $r\ge14r^{3/4}\sqrt\ell$. This implies that $r^{1/4}\geq 14\sqrt{\ell}$ so $q>\ell$ holds.

Set $s_0=r(1-\ell/q)$ and $\kappa=30(q+1)\ell$. Then
$s_0\ge r-r^{3/4}\sqrt\ell>13r/14$.
Since $q\le r^{1/4}\sqrt{\ell}+1$, we have
$\kappa=30(q+1)\ell \le 30(r^{1/4}\sqrt{\ell}+2)\ell<35r^{1/4}\ell^{3/2}$.
Our assumption $r\ge14r^{3/4}\sqrt{\ell}$ ensures $64\kappa<s_0$, because $s_0\ge 13r/14$ and $64\kappa=1920(q+1)\ell$ is dominated by $r$ under our parameter bound.
Thus $s_\ast=64\kappa<s_0$, so Lemma~\ref{lem:sequence} applies and
gives
$$
\nu(D)\ge\frac{s_0}{2}-\frac32\kappa^{1/3}s_0^{2/3}-8\kappa.
$$
The two error terms are at most   $5r^{3/4}\sqrt\ell$ and 
$\frac{10}{7}r^{3/4}\sqrt\ell$, 
respectively. Hence
$
\nu(D)\ge\frac r2-7r^{3/4}\sqrt\ell$.
\end{proof}

% \begin{proof}
% Put $\ell=\log(2n)$ and choose $q=\lceil r^{1/4}\sqrt\ell\rceil$.
% If $r/2<7r^{3/4}\sqrt\ell$, then $\nu(D)\ge0$ already gives the result.
% We may therefore assume $r\ge14r^{3/4}\sqrt\ell$. This implies that $r^{1/4}\geq 14\sqrt{\ell}$ so $q>\ell$ holds.

% Set $s_0=r(1-\ell/q)$ and $\kappa=30(q+1)\ell$. Then
% $s_0\ge r-r^{3/4}\sqrt\ell>13r/14$, and
% $\kappa<35r^{1/4}\ell^{3/2}$. \jbjcomment{Please make the calculation for $\kappa$} The assumption $r\ge14r^{3/4}\sqrt\ell$
% implies $s_\ast=64\kappa<s_0$ \jbjcomment{Also this calculation}, so Lemma~\ref{lem:sequence} applies and
% gives
% $$
% \nu(D)\ge\frac{s_0}{2}-\frac32\kappa^{1/3}s_0^{2/3}-8\kappa.
% $$
% The two error terms are at most   $5r^{3/4}\sqrt\ell$ and 
% $\frac{10}{7}r^{3/4}\sqrt\ell$, 
% respectively. Hence
% $
% \nu(D)\ge\frac r2-7r^{3/4}\sqrt\ell$.
% \end{proof}

% We bound the three terms:
% $$
% \frac{s_0}{2}\ge\frac r2-\frac12r^{3/4}\sqrt\ell,
% $$
% $$
% \frac32\kappa^{1/3}s_0^{2/3}
% <\frac32(35r^{1/4}\ell^{3/2})^{1/3}r^{2/3}
% <5r^{3/4}\sqrt\ell,
% \quad\text{and}\quad
% 8\kappa<280r^{1/4}\ell^{3/2}<\frac{10}{7}r^{3/4}\sqrt\ell,
% $$
% where the last inequality uses $r^{1/2}\ge196\ell$.

\section{Proof of the main theorem}\label{sec:mainproof}

We are now ready to prove the main theorem which we restate below.

\MainThm
\begin{proof}
We establish the stronger explicit bound
$$
f(k)\le\left\lceil2k+64k^{3/4}\sqrt{\log(4k)}\right\rceil
\quad\text{for }k\ge10^{12}.
$$
Since $\log(4k)\le2\log k$ for $k\ge4$, this implies
Theorem~\ref{thm:main} with $C=64\sqrt2$ and $K=10^{12}$.

Set $E_k=k^{3/4}\sqrt{\log(4k)}$ and $r=\lceil2k+64E_k\rceil$.
The function $x^{-1/4}\sqrt{\log(4x)}$ is decreasing for $x\ge10^{12}$, so
$$
\frac{64E_k}{k}\le64\cdot10^{-3}\sqrt{\log(4\cdot10^{12})}<\frac{48}{125}<1,
$$
and hence $2k\le r<3k$.

Suppose there is a counterexample at $(r,k)$,
i.e.\ a digraph $D$ with $\delta^+(D)\ge r$ and $\nu(D)<k$. By
Lemma~\ref{lem:structure}, take one of minimum order $n$ that is
$r$-out-regular. Proposition~\ref{prop:order} gives $n<9k^3$, so
$\log(2n)<3\log(4k)$.

Theorem~\ref{thm:local} yields
$$
\nu(D)\ge\frac r2-7r^{3/4}\sqrt{\log(2n)}>k,
$$
where the strict inequality follows because $r\ge2k+64E_k$, while
$r<3k$ and $\log(2n)<3\log(4k)$ give
$7r^{3/4}\sqrt{\log(2n)}\le7\cdot3^{5/4}E_k<32E_k$, a contradiction.

This shows that no counterexample at $(r,k)$ exists. Hence every
digraph with minimum out-degree at least $r$ contains $k$ disjoint
cycles, so $f(k)\le r=\lceil2k+64E_k\rceil$, as claimed.
\end{proof}

\section*{Use of AI-assisted tools} 

Following Lemma~\ref{lem:deletion}, we establish the $o(k)$ bound using martingale methods. We further attempted to obtain an explicit linear bound of the form $2k+C$ for some absolute constant $C$, but this attempt was unsuccessful. Nevertheless, AI-assisted refinement allowed us to improve the asymptotic error term from $2k+o(k)$ to the explicit bound $O(k^{3/4}\sqrt{\log k})$. Additionally, AI tools aided the construction of the sequence $Z_j(v)$ in Lemma~\ref{lem:sequence}: specifically, removed vertices are assigned a value of zero, and all values are frozen after the first unsuccessful deletion operation.

\end{document}